\documentclass[12pt,a4paper]{article}       
\usepackage{a4wide}
\usepackage{mathptmx}      
\usepackage{amssymb,amsthm,amsmath,amsfonts}
\renewcommand{\Re}{\operatorname{Re}}
\renewcommand{\Im}{\operatorname{Im}}
\newcommand{\RR}{{\mathbb{R}}}
\newcommand{\CC}{{\mathbb{C}}}
\newtheorem{lemma}{Lemma}
\newtheorem{theorem}{Theorem}
\newtheorem{remark}{Remark}
\begin{document}

\title{Stability Estimates and Structural Spectral Properties of Saddle Point Problems\thanks{The research of the first and the third author was supported by the Austrian Science Fund (FWF): W1214-N15, project DK12.}%
}

\author{Wolfgang Krendl\thanks{Doctoral Program Computational Mathematics, Johannes Kepler University Linz, 4040 Linz,
        Austria,\newline \texttt{wolfgang.krendl@dk-compmath.jku.at}}
         \and
        Valeria Simoncini\thanks{Dipartimento di Matematica, Universit\`{a} di Bologna, Piazza di Porta S. Donato 5, 40127 Bologna,\newline Italy \texttt{valeria.simoncini@unibo.it}}
	\and
        Walter Zulehner\thanks{Institute of Computational Mathematics, Johannes Kepler University Linz, 4040 Linz, Austria,\newline \texttt{zulehner@numa.uni-linz.ac.at}}%
}

\date{}

\maketitle

\begin{abstract}
For a general class of saddle point problems sharp estimates for Babu\v{s}ka's inf-sup stability constants are derived in terms of the constants in Brezzi's theory.
In the finite-dimensional Hermitian case more detailed spectral properties of preconditioned saddle point matrices are presented, which are helpful for the convergence analysis of common Krylov subspace methods. 
The theoretical results are applied to two model problems from optimal control with time-periodic state equations. 
Numerical experiments with the preconditioned minimal residual method are reported.

\smallskip
\noindent{\bf Keywords:} Saddle point problems, Babu\v{s}ka-Brezzi theory, inf-sup constants, eigenvalues of saddle point matrices, optimal control, time-periodic state equation

\smallskip
\noindent{\bf Mathematical Subject Classification (2010):} 65F08, 65N22, 65K10, 49K40
\end{abstract}

\section{Introduction}
\label{intro}
In this paper we consider linear problems in saddle point form:
Find $u \in V$ and $p \in Q$ such that
\begin{equation} \label{M}
  \mathcal{M} 
  \begin{bmatrix} u \\ p \end{bmatrix}
   =
  \begin{bmatrix} f \\ g \end{bmatrix}
  \quad \text{with} \quad
  \mathcal{M} = 
  \begin{bmatrix}
     A & B^* \\
     B & -C
   \end{bmatrix}
\end{equation}
with complex Hilbert spaces $V$ and $Q$, $f \in V^*$ and $g \in Q^*$ and linear operators
\[
  A \in L(V,V^*), \quad
  B \in L(V,Q^*), \quad
  B^* \in L(Q,V^*), \quad
  C \in L(Q,Q^*). 
\]
Here $H^*$ denotes the dual of a Hilbert space $H$, which is defined as the set of all bounded and antilinear functionals on $H$, $L(H_1,H_2)$ denotes the set of all bounded and linear operators from $H_1$ to $H_2$, and $B^*$ is the adjoint of $B$.

For the special finite dimensional setting $V = \mathbb{C}^n$ and $Q = \mathbb{C}^m$, the linear operators 
above can be identified with corresponding matrices
\[  
  A \in \mathbb{C}^{n\times n},\quad
  B \in \mathbb{C}^{n\times m}, \quad
  B^*\in \mathbb{C}^{m\times n}, \quad
  C \in \mathbb{C}^{m\times n},
\]
where $B^*$ denotes the conjugate transpose of a matrix $B$. Then (\ref{M}) becomes a linear system of equations in standard matrix-vector notation.

A variety of interesting problems in science and engineering can be framed in this way, for example, mixed formulations of elliptic boundary value problems, the Stokes problem in fluid mechanics and the optimality system of PDE-constraint optimization problems
, see, e.g., \cite{lions:71}, \cite{brezzi:91}, \cite{benzi:05}, \cite{troeltzsch:10}, \cite{hinze:09}. Nonlinear versions of such problems are typically approximated by a sequence of systems of the form (\ref{M}) during a linearization process, see, e.g, \cite{troeltzsch:10}.
Discretized versions of these problems lead to finite dimensional linear systems of the form (\ref{M}) with large scale and sparse matrices $A$, $B$ and $C$, see, e.g, \cite{brezzi:91}.
The use of a framework based on complex rather than real Hilbert spaces is motivated by particular applications in spaces of time-periodic functions, for which the Fourier transform naturally introduces a representation in complex Hilbert spaces.
 
We will address two fundamental and related topics: stability estimates for (\ref{M}) in general Hilbert spaces and spectral properties of the matrix $\mathcal{M}$ in the finite dimensional setting for Hermitian matrices $\mathcal{M}$. 

Stability estimates for (\ref{M}) are estimates of the norm of the solution 
\[
  x = \begin{bmatrix} u \\ p \end{bmatrix}  \in X = V \times Q
\]
in terms of the norm of the right-hand side 
\[
  \begin{bmatrix} f \\ g \end{bmatrix}  \in V^* \times Q^* .
\]
The product space $V^* \times Q^*$ can be identified with $X^*$, the dual of $X$, in a canonical way. That makes $\mathcal{M}$ a bounded linear operator from $X$ to $X^*$ and a stability estimate can be written in form of an upper bound for the corresponding norm of $\mathcal{M}^{-1}$:
\begin{equation} \label{lower}
   \|\mathcal{M}^{-1}\|_{L(X^*,X)} \le \frac{1}{\underline{c}} ,
\end{equation}
where  $\|\cdot \|_H$ denotes the norm in a Hilbert space $H$.
Of course, such an estimate depends on the choice of the norm or better the inner product in $X$.
We will concentrate on inner products in $X$ of the particular form
\begin{equation} \label{innerprodX}
  \big( x, y \big)_X = ( u, v )_V + ( p, q )_Q
  \quad \text{for} \quad
  x = \begin{bmatrix} u \\ p \end{bmatrix}, \ 
  y = \begin{bmatrix} v \\ q \end{bmatrix}, \ 
\end{equation}
where $(\cdot,\cdot)_H$ denotes the inner product in a Hilbert space $H$.

Stability estimates are required for showing the well-posedness of (\ref{M}) and are also important for discretization error estimates. 
Rather sharp stability bounds $\underline{c}$ for the case $C = 0$ are already contained in the pioneering paper \cite{brezzi:74} and have been improved, e.g., in \cite{xu:03}. For $C = 0$ we will show how to further improve the estimates and present the best possible bounds in a general Hilbert space setting under the same assumptions as in \cite{brezzi:74} and \cite{xu:03}. For general linear operators $C$ we will formulate a simple but rather helpful criterion which leads to stability estimates for certain problems from optimal control.

Note that, contrary to (\ref{lower}), sharp upper bounds of the form
\begin{equation} \label{upper}
   \|\mathcal{M}\|_{L(X,X^*)} \le \overline{c}
\end{equation}
are easy to obtain and well-known. 

For $V = \mathbb{C}^n$ and $Q = \mathbb{C}^m$, inner products can be represented by Hermitian and positive definite matrices. In particular, (\ref{innerprodX}) corresponds to inner products of the following form
\[
  \big( x, y \big)_X = \langle P u,v \rangle + \langle R p,q \rangle = \langle \mathcal{P} x,y \rangle 
  \quad \text{with} \quad
  \mathcal{P} = \begin{bmatrix} P & 0 \\ 0 & R \end{bmatrix},
\]
where $\langle \cdot,\cdot \rangle$ denotes the Euclidean inner product in $\mathbb{C}^r$ and $P \in \mathbb{C}^{n \times n}$, $R \in \mathbb{C}^{m \times m}$, and, therefore, also $\mathcal{P} \in \mathbb{C}^{(n+m)\times (n+m)}$ are Hermitian and positive definite matrices.

In the Hermitian case $\mathcal{M}^* = \mathcal{M}$ the estimates (\ref{lower}) and (\ref{upper}) immediately lead to corresponding estimates 
\[
  \underline{c} \le |\mu| \le \overline{c},
\]
for the eigenvalues $\mu$ of the matrix 
$\widehat{\mathcal{M}} = \mathcal{P}^{-1} \mathcal{M}$.
Therefore, we have 
\[
  \mu \in [-\overline{c},-\underline{c}] \cup [\underline{c},\overline{c}].
\]
We will show that inclusions
 of this form with two intervals symmetrically arranged around 0 are appropriate for the case $C = A$ and $R = P$ due to symmetry properties of the spectrum of $\widehat{\mathcal{M}}$. 
For the case $C = 0$ and $A$ positive definite,
sharp estimates of the more general form
\begin{equation} \label{mu}
  \mu \in [\mu_1,\mu_2] \cup [\mu_3,\mu_4]
\end{equation}
with $\mu_1 \le \mu_2 < 0 < \mu_3 \le \mu_4$ were derived 
in \cite{rusten:92}.
For  positive semidefinite $C$ and indefinite $A$, 
rather sharp estimates were presented in \cite{gould:10}. For the special case $C = 0$ we will further improve these estimates and present the best possible bounds under similar assumptions as in \cite{gould:10}.

Preconditioned Krylov subspace methods are an important class of iterative methods for solving (\ref{M}) in the finite dimensional setting with large and sparse matrices $A$, $B$, and $C$;
we refer to the survey article \cite{benzi:05} and the references cited there for
these as well as other solution techniques for saddle point problems. In the Hermitian case $\mathcal{M}^* = \mathcal{M}$ a prominent representative of these methods is the preconditioned minimal residual method (MINRES), see \cite{paige:75}, whose convergence analysis relies
on inclusions of the form (\ref{mu}), see, e.g., \cite{paige:75}, \cite{greenbaum:97}.

The paper is organized as follows. 
Sect.~\ref{general} contains stability estimates of the form (\ref{lower}) in general Hilbert spaces for $C = 0$. 
In Sect.~\ref{Hermitian} we study the finite dimensional Hermitian case and derive sharp estimates of the form (\ref{mu}) for $C = 0$. 
Structural properties of the spectrum of $\widehat{\mathcal{M}}$ for $C = A$ and $R = P$ are studied in Sect.~\ref{symspectrum}.
In Sect.~\ref{applications} the general results of the preceding sections are applied to two model problems from optimal control, distributed time-periodic parabolic control and distributed time-periodic Stokes control. The paper finishes with concluding remarks and some technical details of a proof shifted to the appendix.

\section{The general case}
\label{general}

For the analysis we use a reformulation of (\ref{M}) as a variational problem: 
The linear operators $A$, $B$, and $C$ uniquely determine sesquilinear forms $a$, $b$, and $c$ on $V \times V$, $V \times Q$ and $Q \times Q$, respectively, given by
\begin{equation} \label{ABC}
  a(u,v) = \langle Au,v \rangle, \quad
  b(v,q) = \langle Bv,q \rangle = \langle B^*q,v \rangle, \quad
  c(p,q) = \langle Cp,q \rangle ,
\end{equation}
where, in general, $\langle \cdot,\cdot \rangle$ denotes the duality pairing in Hilbert spaces. (It reduces to the Euclidean inner product, as used in the introduction, in matrix-vector notation in the finite dimensional setting.)
Then (\ref{M}) becomes a mixed variational problem: 
For given $f \in V^*$ and $g \in Q^*$, find $u \in V$ and $p \in Q$ such that
\begin{equation} \label{abc}
\begin{aligned}
  a(u,v) + b(v,p) & = f(v) \quad \text{for all} \ v \in V, \\
  b(u,q) - c(p,q) & = g(q) \quad \text{for all} \ q \in Q,
\end{aligned}
\end{equation}
or, equivalently, a variational problem in the product space $X = V \times Q$: Find $x \in X$ such that
\begin{equation} \label{B}
  \mathcal{B}(x,y) =  \mathcal{F}(y) \quad \text{for all} \ y \in X
\end{equation}
with
\[
  \mathcal{B}(x,y) = \langle \mathcal{M}x,y \rangle = a(u,v) + b(v,p) + b(u,q) - c(p,q),
  \quad
  \mathcal{F}(y) = f(v) + g(q)
\]
for
\[
  x = \begin{bmatrix} u \\ p \end{bmatrix}
  \quad \text{and} \quad
  y = \begin{bmatrix} v \\ q \end{bmatrix} .
\]

From the famous Babu\v{s}ka-Brezzi theory, see \cite{babuska:71}, \cite{babuska:73}, \cite{brezzi:74}, applied to the case $C = 0$, it is well-known that the following three statements are equivalent:
\begin{enumerate}
\item
$\mathcal{M}$ is an isomorphism between $X$ and $X^*$
\item
$\|\mathcal{B}\| < \infty$ with
\[
   \|\mathcal{B}\| = \sup_{0 \neq x \in X} \sup_{0 \neq y \in X} 
                    \frac{| \mathcal{B}(x,y) |}{\|x\|_X \|y\|_X} 
\]
and
\[
  \inf_{\rule[0.6ex]{0ex}{1ex} 0 \neq x \in X} \sup_{0 \neq y \in X} 
           \frac{|\mathcal{B}(x,y)|}{\|x\|_X \|y\|_X}
   = \inf_{\rule[0.6ex]{0ex}{1ex} 0 \neq y \in X} \sup_{0 \neq x \in X} 
           \frac{|\mathcal{B}(x,y)|}{\|x\|_X \|y\|_X} \equiv \gamma > 0.
\]
\item
$\|a\| < \infty$, $\|b\| < \infty$ with
\[
  \|a\| = \sup_{0 \neq u \in V} \sup_{0 \neq v \in V} 
                    \frac{| a(u,v) |}{\|u\|_V \|v\|_V}, \quad
  \|b\| = \sup_{0 \neq v \in V} \sup_{0 \neq q \in Q} 
                    \frac{| b(v,q) |}{\|v\|_V \|q\|_Q}, 
\]
and
\[
  \inf_{\rule[0.6ex]{0ex}{1ex} 0 \neq u \in \ker B} \sup_{0 \neq v \in \ker B} 
           \frac{|a(u,v)|}{\|u\|_V \|v\|_V}
   = \inf_{\rule[0.6ex]{0ex}{1ex} 0 \neq v \in \ker B} \sup_{0 \neq u \in \ker B} 
           \frac{|a(u,v)|}{\|u\|_V \|v\|_V} \equiv \alpha > 0,
\]
\[
  \inf_{\rule[0.6ex]{0ex}{1ex} 0 \neq q \in Q} \sup_{0 \neq v \in V} 
           \frac{|b(v,q)|}{\|v\|_V \|q\|_Q} \equiv \beta  > 0.
\]
\end{enumerate}
It is also well-known that the quantities $\|\mathcal{B}\|$ and $\gamma$ can be expressed in terms of norms of the associated operator $\mathcal{M}$:
\begin{equation} \label{gammaMinv}
  \|\mathcal{B}\| = \|\mathcal{M}\|_{L(X,X^*)}, \quad
  \frac{1}{\gamma} = \|\mathcal{M}^{-1}\|_{L(X^*,X)}.
\end{equation}
With these notations and relations, (\ref{lower}) and (\ref{upper}) take the following form
\[
  0 <  \underline{c} \le \gamma  \quad \text{and} \quad \|\mathcal{B}\| \le \overline{c} < \infty.
\]
Our aim is thus to derive lower bounds of $\gamma$ and upper bounds for $\|\mathcal{B}\|$. One possible approach to obtain these bounds is based on available lower bounds for $\alpha$, $\beta$ and upper bounds for $\|a\|$, $\|b\|$. These quantities can also be expressed in terms of norms of the associated operators $A$ and $B$.
We follow here the presentation in \cite{arnold:81} and introduce a space decomposition on $V$:
\[
  V = V_0 + V_1 \quad \text{with} \quad V_0 = \ker B \quad \text{and} \quad V_1 = V_0^\perp ,
\]
and define the operators $A_{ij} \in L(V_j,V_i^*)$ for all $i,j \in \{0,1\}$ by
\[
  \langle A_{ij} u,v \rangle = a(u,v) \quad \text{for all} \quad u \in V_j,\ v \in V_i
\]
and $B_1 \in L(V_1,Q^*)$ by
\[
  \langle B_1 v,q \rangle = b(v,q) \quad \text{for all} \quad v \in V_1, \ q \in Q.
\]
Then it is easy to see that $A_{00}$ and $B_1$ are isomorphisms and we have:
\begin{equation} \label{const1}
  \|a\| = \|A\|_{L(V,V^*)}, \quad
  \|b\| = \|B\|_{L(V,Q^*)} = \|B_1\|_{L(V_1,Q^*)} = \|B_1^*\|_{L(Q,V_1^*)},
\end{equation}
and
\begin{equation} \label{const2}
  \frac{1}{\alpha} = \|A_{00}^{-1}\|_{L(V_0^*,V_0)}, \quad
  \frac{1}{\beta} = \|B_1^{-1}\|_{L(Q^*,V_1)} = \|(B_1^*)^{-1}\|_{L(V_1^*,Q)}.
\end{equation}

With these operators we can rewrite the original problem in a 3-by-3 block form, as suggested in \cite{arnold:81}:
\begin{equation} \label{3by3}
  \begin{bmatrix}
    A_{00} & A_{01} & 0 \\
    A_{10} & A_{11} & B_1^* \\
    0 & B_1 & 0 
  \end{bmatrix}
  \begin{bmatrix} u_0 \\ u_1 \\ p \end{bmatrix}
   =
  \begin{bmatrix} f_0 \\ f_1 \\ g \end{bmatrix} ,
\end{equation}
with $u = u_0 + u_1$, $u_i \in V_i$ and $f_i \in V_i^*$ are given by $\langle f_i,v \rangle = \langle f,v \rangle$, $v \in V_i$ for $i \in \{0,1\}$. As observed in \cite{arnold:81} the inverse of the 3-by-3 block operator in (\ref{3by3}) is given by
\begin{align*}
   \begin{bmatrix}
      A_{00}^{-1} & 0 & - A_{00}^{-1}A_{01}  B_1^{-1} \\
      0 & 0 & B_1^{-1} \\
      - (B_1^*)^{-1} A_{10} A_{00}^{-1} & (B_1^*)^{-1} & - (B_1^*)^{-1} \left[ A_{11} -  A_{10} A_{00}^{-1} A_{01} \right]  B_1^{-1} 
    \end{bmatrix} .
\end{align*}
Using this representation and (\ref{const1}), (\ref{const2}) it follows immediately for (\ref{3by3}) that
\begin{align*}
  \begin{bmatrix} \|u_0\|_V \\[1ex] \|u_1\|_V \\[1ex] \|p\|_Q \end{bmatrix}
  & \le 
  \begin{bmatrix}
      \frac{1}{\alpha} & 0 & \frac{1}{\beta} \, \left\|A_{00}^{-1}A_{01} \right\| \\
        0 & 0 & \frac{1}{\beta} \\
      \frac{1}{\beta} \, \left\|A_{10} A_{00}^{-1} \right\| & \frac{1}{\beta} & \frac{1}{\beta^2} \, 
      \left\| A_{11} -  A_{10} A_{00}^{-1} A_{01} \right\| 
  \end{bmatrix}
  \begin{bmatrix} \|f_0\|_{V_0^*} \\[1ex] \|f_1\|_{V_1^*} \\[1ex] \|g\|_{Q^*} \end{bmatrix}.
\end{align*}
Here, we dropped indices of the operator norms for simplicity. It is clear from the context which operator norm is meant.
Obviously, we have $\|A_{ij}\| \le \|A\| = \|a\|$ for $i \in \{0,1 \}$. Therefore,
\begin{equation} \label{old1}
  \left\|A_{00}^{-1} A_{01} \right\| \le \frac{\|a\|}{\alpha}, \quad
  \left\|A_{10} A_{00}^{-1} \right\| \le \frac{\|a\|}{\alpha},
\end{equation}
and
\begin{equation} \label{old2}
  \left\| A_{11} -  A_{10} A_{00}^{-1} A_{01} \right\| \le \|a\| + \frac{\|a\|^2}{\alpha},
\end{equation}
which imply
\begin{equation} \label{est1}
  \begin{bmatrix} \|u_0\|_V \\[1ex] \|u_1\|_V \\[1ex] \|p\|_Q \end{bmatrix}
    \le 
  \begin{bmatrix}
      \frac{1}{\alpha} & 0 & \frac{\|a\|}{\alpha \beta} \\
        0 & 0 & \frac{1}{\beta} \\
      \frac{\|a\|}{\alpha \beta}  & \frac{1}{\beta} & \frac{\|a\|}{\beta^2} \, 
      \left( 1 + \frac{\|a\|}{\alpha} \right) 
  \end{bmatrix}
  \begin{bmatrix} \|f_0\|_{V_0^*} \\[1ex] \|f_1\|_{V_1^*} \\[1ex] \|g\|_{Q^*} \end{bmatrix} .
\end{equation}
Using
\[
  \|u\|_V \le \|u_0\|_V + \|u_1\|_V 
  \quad \text{and} \quad
  \|f_0\|_{V_0^*} \le \|f\|_{V^*}, \quad
  \|f_1\|_{V_1^*} \le \|f\|_{V^*} ,
\]
it easily follows that
\begin{align*}
  \begin{bmatrix} \|u\|_V \\[1ex] \|p\|_Q \end{bmatrix}
  & \le 
  \begin{bmatrix}
      \frac{1}{\alpha} & \frac{1}{\beta} \, 
      \left( 1 + \frac{\|a\|}{\alpha} \right) \\
      \frac{1}{\beta} \, 
      \left( 1 + \frac{\|a\|}{\alpha} \right) & \frac{\|a\|}{\beta^2} \, 
      \left( 1 + \frac{\|a\|}{\alpha} \right) 
  \end{bmatrix}
  \begin{bmatrix} \|f\|_{V^*} \\[1ex] \|g\|_{Q^*} \end{bmatrix}.
\end{align*}
These are exactly the estimates that can be already found in \cite{brezzi:74} and later in \cite{arnold:81} with essentially the same line of arguments. Since
\[
  \begin{bmatrix} u \\ p \end{bmatrix} = \mathcal{M}^{-1} \begin{bmatrix} f \\ g \end{bmatrix}
\]
and $f \in V^*$, $g \in Q^*$ can be chosen arbitrarily, this estimate shows that 
\begin{equation} \label{D1}
  \|\mathcal{M}^{-1}\|_{L(X^*,X)} \le \rho(D_1) \quad \text{with} \quad
  D_1 = \begin{bmatrix}
      \frac{1}{\alpha} & \frac{1}{\beta} \, 
      \left( 1 + \frac{\|a\|}{\alpha} \right) \\
      \frac{1}{\beta} \, 
      \left( 1 + \frac{\|a\|}{\alpha} \right) & \frac{\|a\|}{\beta^2} \, 
      \left( 1 + \frac{\|a\|}{\alpha} \right) 
  \end{bmatrix} ,
\end{equation}
where $\rho(M)$ denotes the spectral radius of a matrix $M$.
From (\ref{gammaMinv}) we learn that $1/\rho(D_1)$ is a lower bound for $\gamma$. 

With a slight variation of the arguments we obtain a better lower bound for $\gamma$. The essential step is an improvement of the estimates in (\ref{old1}), (\ref{old2}).

\begin{lemma} \label{lem1}
We have
\[
  \| A_{10} A_{00}^{-1} \| \le \sqrt{\frac{\|a\|^2}{\alpha^2} - 1}, \quad
  \| A_{00}^{-1} A_{01} \| \le \sqrt{\frac{\|a\|^2}{\alpha^2} - 1}
\]
and
\[
  \left\|  A_{11} -  A_{10} A_{00}^{-1} A_{01} \right\|  \le \frac{\|a\|^2}{\alpha}.
\]
\end{lemma}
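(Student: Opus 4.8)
The plan is to deduce all three bounds from a single structural observation. Since $A$ is identified with an operator in $L(V,V^*)$ of norm $\|a\|$ and $V_1 = V_0^\perp$, the dual space splits orthogonally as $V^* = V_0^* \oplus V_1^*$, so for every $u = u_0 + u_1 \in V$ with $u_i \in V_i$ one has the Pythagorean identity
\[
  \|Au\|_{V^*}^2 = \|A_{00}u_0 + A_{01}u_1\|_{V_0^*}^2 + \|A_{10}u_0 + A_{11}u_1\|_{V_1^*}^2 ,
\]
because $A_{00}u_0 + A_{01}u_1$ and $A_{10}u_0 + A_{11}u_1$ are precisely the $V_0^*$- and $V_1^*$-components of $Au$, by the defining relations of the $A_{ij}$. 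I will also use that $A_{00}$ is an isomorphism with $\|A_{00}^{-1}\| = 1/\alpha$ and the elementary fact $\alpha \le \|a\|$, which makes the square roots meaningful.

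For the first estimate I would take $u_1 = 0$ and $u_0 = A_{00}^{-1}w$ with $w \in V_0^*$ arbitrary. The identity above, combined with $\|AA_{00}^{-1}w\|_{V^*} \le \|a\|\,\|A_{00}^{-1}w\|_V \le (\|a\|/\alpha)\|w\|_{V_0^*}$, gives $\|w\|_{V_0^*}^2 + \|A_{10}A_{00}^{-1}w\|_{V_1^*}^2 \le (\|a\|^2/\alpha^2)\|w\|_{V_0^*}^2$, and rearranging and taking the supremum over $w \neq 0$ yields $\|A_{10}A_{00}^{-1}\| \le \sqrt{\|a\|^2/\alpha^2 - 1}$. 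The second estimate I would obtain from the first by symmetry: apply the first estimate to the sesquilinear form $a^*(u,v) := \overline{a(v,u)}$, which has the same norm $\|a\|$ and the same inf-sup constant $\alpha$ over $\ker B$ (here the two-sided equality $\inf_u\sup_v = \inf_v\sup_u$ recorded in the equivalent conditions above is used); its $(0,0)$-block is the adjoint of $A_{00}$ and its $(1,0)$-block the adjoint of $A_{01}$, so the first estimate gives $\|A_{01}^*(A_{00}^*)^{-1}\| \le \sqrt{\|a\|^2/\alpha^2-1}$, and since $A_{01}^*(A_{00}^*)^{-1} = (A_{00}^{-1}A_{01})^*$ has the same norm as $A_{00}^{-1}A_{01}$, the claim follows.

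For the Schur-complement estimate I would choose, for given $u_1 \in V_1$, the companion component $u_0 := -A_{00}^{-1}A_{01}u_1 \in V_0$ and set $u := u_0 + u_1$. Then the $V_0^*$-component of $Au$ vanishes while its $V_1^*$-component equals $(A_{11} - A_{10}A_{00}^{-1}A_{01})u_1$, so the Pythagorean identity collapses to $\|(A_{11}-A_{10}A_{00}^{-1}A_{01})u_1\|_{V_1^*} = \|Au\|_{V^*} \le \|a\|\,\|u\|_V$. Bounding $\|u\|_V^2 = \|u_0\|_V^2 + \|u_1\|_V^2 \le (\|A_{00}^{-1}A_{01}\|^2 + 1)\|u_1\|_V^2 \le (\|a\|^2/\alpha^2)\|u_1\|_V^2$ by the second estimate and taking the supremum over $u_1 \neq 0$ gives $\|A_{11} - A_{10}A_{00}^{-1}A_{01}\| \le \|a\|^2/\alpha$.

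I do not expect a genuine obstacle here; two points need a little care. First, one must justify the orthogonality of the splitting $V^* = V_0^* \oplus V_1^*$ and correctly match the components of $Au$ with the blocks $A_{ij}$ — this is where the choice $V_1 = V_0^\perp$ enters. Second, the first estimate does not transfer directly to $A_{00}^{-1}A_{01}$: a naive repetition of the argument only reproduces the crude bound $\|A_{01}\| \le \|a\|$, and it is the passage to the adjoint form $a^*$ that sharpens it, which is legitimate precisely because $\alpha$ is a two-sided inf-sup constant. Everything else is elementary manipulation of operator norms.
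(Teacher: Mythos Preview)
Your proof is correct and is essentially the same argument as the paper's, only phrased in terms of the Pythagorean identity for $\|Au\|_{V^*}$ rather than in block-matrix notation: the paper writes the first estimate as $1+\|A_{10}A_{00}^{-1}\|^2 = \bigl\|\bigl[\begin{smallmatrix}I\\A_{10}A_{00}^{-1}\end{smallmatrix}\bigr]\bigr\|^2 = \bigl\|\bigl[\begin{smallmatrix}A_{00}\\A_{10}\end{smallmatrix}\bigr]A_{00}^{-1}\bigr\|^2 \le \|a\|^2/\alpha^2$, handles the second ``analogously'' (your passage to $a^*$ is exactly the natural reading of that), and factors the Schur complement as $\bigl[\begin{smallmatrix}A_{10}A_{00}^{-1} & -I\end{smallmatrix}\bigr]\bigl[\begin{smallmatrix}A_{01}\\A_{11}\end{smallmatrix}\bigr]$ to get the third. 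The only cosmetic difference is that you feed the \emph{second} inequality into the Schur-complement bound while the paper feeds in the first; both yield the same $\|a\|^2/\alpha$.
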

\begin{proof}
By using the simple estimate
\[
  \left\| \begin{bmatrix} A_{00} \\ A_{10} \end{bmatrix} \right\| 
    \le \left \|\begin{bmatrix} A_{00} & A_{01} \\ A_{10} & A_{11} \end{bmatrix} \right\|
     = \|A\| = \|a\| ,
\]
it follows that:
\[
  1 + \|A_{10} A_{00}^{-1}\|^2 
    = \left\| \begin{bmatrix} I \\ A_{10}A_{00}^{-1} \end{bmatrix} \right\|^2
    = \left\| \begin{bmatrix} A_{00} \\ A_{10} \end{bmatrix} A_{00}^{-1} \right\|^2
   \le \frac{\|a\|^2}{\alpha^2},
\]
from which the first inequality follows immediately. The second inequality can be shown analogously.
For the third estimate we have
\begin{align*}
  \| A_{10} A_{00}^{-1} A_{01} - A_{11}\| 
  & = \left\| \begin{bmatrix} A_{10} A_{00}^{-1} & - I  \end{bmatrix}  \begin{bmatrix} A_{01} \\ A_{11} \end{bmatrix} \right\|
  \le \left\| \begin{bmatrix} A_{10} A_{00}^{-1} & - I  \end{bmatrix} \right\| \left\|  \begin{bmatrix} A_{01} \\ A_{11} \end{bmatrix} \right\| \\
  & = \sqrt{1 + \|A_{10} A_{00}^{-1}\|^2} 
       \left\| \begin{bmatrix} A_{01} \\ A_{11} \end{bmatrix} \right\|
   \le \frac{\|a\|}{\alpha} \, \|a\| = \frac{\|a\|^2}{\alpha} ,
\end{align*} 
which completes the proof.
\end{proof}

Using the estimates of the previous lemma we obtain the following result.

\begin{theorem} \label{mr}
With the notation introduced above, the following holds:
\begin{enumerate}
\item
Let $\gamma_\text{opt}(\alpha,\beta,\|a\|)$ be the smallest positive root of the cubic equation
\begin{equation} \label{cubic}
  \mu^3 - (\|a\|^2 + \beta^2) \mu + \alpha \, \beta^2  = 0.
\end{equation}
Then
\item
We have
\begin{equation} \label{D2}
  \begin{bmatrix} \|u\|_V \\[1ex] \|p\|_Q \end{bmatrix}
   \le \begin{bmatrix}
      \frac{1}{\alpha} & \frac{1}{\beta} \, 
      \frac{\|a\|}{\alpha} \\[1ex]
      \frac{1}{\beta} \, 
      \frac{\|a\|}{\alpha} & \frac{\|a\|}{\beta^2} \, 
      \frac{\|a\|}{\alpha}
  \end{bmatrix} \, 
  \begin{bmatrix} \|f\|_{V^*} \\[1ex] \|g\|_{Q^*} \end{bmatrix}
\end{equation}
and $\gamma \ge \alpha/(1 + \kappa^2)$ with $\kappa = \|a\|/\beta$.
\end{enumerate}
\end{theorem}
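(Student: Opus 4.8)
The plan is to revisit the derivation that produced $D_1$ in (\ref{D1}) and feed in the sharp constants of Lemma~\ref{lem1} instead of the crude bounds (\ref{old1})--(\ref{old2}). First I would start from the block representation of the inverse of (\ref{3by3}) displayed above and, using (\ref{const1}), (\ref{const2}), the triangle inequality and submultiplicativity of operator norms, derive the componentwise estimate
\[
  \begin{bmatrix} \|u_0\|_V \\ \|u_1\|_V \\ \|p\|_Q \end{bmatrix}
  \le D_3 \begin{bmatrix} \|f_0\|_{V_0^*} \\ \|f_1\|_{V_1^*} \\ \|g\|_{Q^*} \end{bmatrix},
  \qquad
  D_3 = \begin{bmatrix}
    \frac1\alpha & 0 & \frac s\beta \\[0.6ex]
    0 & 0 & \frac1\beta \\[0.6ex]
    \frac s\beta & \frac1\beta & \frac{\|a\|^2}{\alpha\beta^2}
  \end{bmatrix},
  \qquad s := \sqrt{\frac{\|a\|^2}{\alpha^2}-1}\ge 0 ,
\]
where $s$ is real since $\alpha\le\|a\|$, the off-diagonal entries come from $\|A_{00}^{-1}A_{01}\|,\ \|A_{10}A_{00}^{-1}\|\le s$, and the $(3,3)$ entry from $\|A_{11}-A_{10}A_{00}^{-1}A_{01}\|\le\|a\|^2/\alpha$. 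The decisive feature, absent for the matrix one would get from (\ref{old1})--(\ref{old2}), is that $D_3$ is \emph{symmetric} with nonnegative entries.

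Next I would pass from components to a norm estimate. Since $V=V_0\oplus V_1$ orthogonally, Riesz representation gives $\|f\|_{V^*}^2=\|f_0\|_{V_0^*}^2+\|f_1\|_{V_1^*}^2$ and $\|u\|_V^2=\|u_0\|_V^2+\|u_1\|_V^2$, hence $\|x\|_X^2=\|u_0\|_V^2+\|u_1\|_V^2+\|p\|_Q^2$ and $\|(f,g)\|_{X^*}^2=\|f_0\|_{V_0^*}^2+\|f_1\|_{V_1^*}^2+\|g\|_{Q^*}^2$. Applying monotonicity of the Euclidean norm to the nonnegative vectors above and then $\|D_3\|_2=\rho(D_3)$ (valid because $D_3$ is symmetric) gives $\|\mathcal{M}^{-1}\|_{L(X^*,X)}\le\rho(D_3)$, and thus $\gamma\ge 1/\rho(D_3)$ by (\ref{gammaMinv}).

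It then remains to identify $1/\rho(D_3)$ with $\gamma_\text{opt}$. Expanding $\det(D_3-\lambda I)$ and using $s^2+1=\|a\|^2/\alpha^2$, the coefficient of $\lambda$ cancels, so the eigenvalues of $D_3$ solve $\alpha\beta^2\lambda^3-(\|a\|^2+\beta^2)\lambda^2+1=0$; substituting $\lambda=1/\mu$ turns this into precisely the cubic (\ref{cubic}), so its roots are the reciprocals of the eigenvalues of $D_3$. Since $\det D_3=-1/(\alpha\beta^2)<0$ and $\rho(D_3)$ is a positive eigenvalue by Perron's theorem, $D_3$ has exactly one negative and two positive eigenvalues, all real (a non-real conjugate pair together with the positive Perron root would force $\det D_3\ge0$). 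Hence (\ref{cubic}) has one negative and two positive roots and $\rho(D_3)$ is the reciprocal of the \emph{smallest} positive one, i.e.\ $1/\rho(D_3)=\gamma_\text{opt}(\alpha,\beta,\|a\|)$, which is the first assertion. For the second assertion I would return to the first-step estimate: with $\|f_0\|\le\|f\|$ and $s\le\|a\|/\alpha$ one gets $\|u\|_V^2\le\big(\frac1\alpha\|f\|_{V^*}+\frac{\|a\|}{\alpha\beta}\|g\|_{Q^*}\big)^2$, and Cauchy--Schwarz in the form $s\|f_0\|+\|f_1\|\le\sqrt{s^2+1}\,\|f\|=\frac{\|a\|}\alpha\|f\|$ turns the bound on $\|p\|_Q$ into the second row of (\ref{D2}). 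The $2\times2$ matrix in (\ref{D2}) is symmetric of rank one with spectral radius $(1+\kappa^2)/\alpha$, so the same reasoning yields $\gamma\ge\alpha/(1+\kappa^2)$; consistently, inserting $\mu=\alpha\beta^2/(\|a\|^2+\beta^2)$ into the left side of (\ref{cubic}) gives a positive number and this $\mu$ lies to the left of the larger positive root, so $\gamma_\text{opt}\ge\alpha/(1+\kappa^2)$.

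The first step is routine. The two delicate points I anticipate are: (i) that keeping the split $u=u_0+u_1$ \emph{and} using the exact constants $\sqrt{\|a\|^2/\alpha^2-1}$ from Lemma~\ref{lem1} is exactly what makes $D_3$ symmetric and kills the linear term of its characteristic polynomial (any coarser bound destroys the clean cubic); and (ii) the sign bookkeeping that pins $\rho(D_3)$ to the smallest, not the larger, positive root of (\ref{cubic}). If, beyond the inequality, one wants that $\gamma_\text{opt}$ is attained (so the bound is best possible for data constrained only through $\alpha$, $\beta$, $\|a\|$), the remaining and genuinely delicate work is to construct a finite-dimensional example in which all three estimates of Lemma~\ref{lem1} and the two norm inequalities just used hold with equality simultaneously.
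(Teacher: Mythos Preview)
Your proposal is correct and follows essentially the same route as the paper: plug the sharp constants of Lemma~\ref{lem1} into the $3\times 3$ componentwise estimate to obtain the symmetric nonnegative matrix you call $D_3$ (the paper's $E$), use $\|E\|_2=\rho(E)$ to get $\gamma\ge 1/\rho(E)$, and identify $1/\rho(E)$ with the smallest positive root of (\ref{cubic}) via the characteristic polynomial; for part~2 both you and the paper collapse the $3\times 3$ estimate to the rank-one $2\times 2$ matrix $D_2$ by Cauchy--Schwarz on $(\|f_0\|,\|f_1\|)$. Your write-up is in fact more explicit than the paper's on two points it leaves implicit---the reciprocal substitution $\lambda=1/\mu$ and the sign bookkeeping (one negative, two positive eigenvalues of $D_3$) that pins $\rho(D_3)$ to the \emph{smallest} positive root---and your closing consistency check $\gamma_{\mathrm{opt}}\ge\alpha/(1+\kappa^2)$ is also established in the paper, right after the proof, by the same evaluation of the cubic at $\mu_0=\alpha/(1+\kappa^2)$ (the missing ingredient ``$\mu_0$ lies to the left of the larger positive root'' follows from $q(\alpha)=\alpha(\alpha^2-\|a\|^2)\le 0$, which places $\alpha\ge\mu_0$ between the two positive roots).
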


\begin{proof}
From (\ref{est1}) and Lemma \ref{lem1} it follows that
\begin{equation} \label{est3}
  \begin{bmatrix} \|u_0\|_V \\[1ex] \|u_1\|_V \\[1ex] \|p\|_Q \end{bmatrix}
    \le 
   E \, 
  \begin{bmatrix} \|f_0\|_{V_0^*} \\[1ex] \|f_1\|_{V_1^*} \\[1ex] \|g\|_{Q^*} \end{bmatrix},
  \  \text{with} \ 
  E = \begin{bmatrix}
      \frac{1}{\alpha} & 0 & \frac{1}{\beta} \, \sqrt{\frac{\|a\|^2}{\alpha^2} - 1} \\
        0 & 0 & \frac{1}{\beta} \\
      \frac{1}{\beta} \, \sqrt{\frac{\|a\|^2}{\alpha^2} - 1} & \frac{1}{\beta} & \frac{1}{\beta^2} \,  \frac{\|a\|^2}{ \alpha},
   \end{bmatrix} .
\end{equation}
Therefore,
\[
  \left\| \begin{bmatrix} u \\ p \end{bmatrix} \right\|_X \le \rho(E) \left\| \begin{bmatrix} f \\ g \end{bmatrix} \right\|_{X^*},
\]
which implies that $1/\rho(E)$ is a lower bound for $\gamma$. 
Since $E$ is a nonnegative matrix, $\rho(E)$ is an eigenvalue of $E$, 
see \cite[page 26]{berman:79}. Therefore, 
$\rho(E)$ is the largest positive root of the characteristic polynomial of $E$, which directly leads to the cubic equation (\ref{cubic}) for $1/\rho(E) = \gamma_\text{opt}(\alpha,\beta,\|a\|)$. 
Using
\[
  \|u\|_V^2 = \|u_0\|_V^2 + \|u_1\|_V^2 ,
  \quad  \|f\|_{V^*}^2 = \|f_0\|_{V_0^*}^2 + \|f_1\|_{V_1^*}^2 ,
\]
and Cauchy's inequality, it easily follows from (\ref{est3}) that
\[
  \begin{bmatrix} \|u\|_V \\[1ex] \|p\|_Q \end{bmatrix}
   \le D_2 \, 
  \begin{bmatrix} \|f\|_{V^*} \\[1ex] \|g\|_{Q^*} \end{bmatrix} ,
  \quad \text{with} \quad
  D_2 = \begin{bmatrix}
      \frac{1}{\alpha} & \frac{1}{\beta} \, 
      \frac{\|a\|}{\alpha} \\[1ex]
      \frac{1}{\beta} \, 
      \frac{\|a\|}{\alpha} & \frac{\|a\|}{\beta^2} \, 
      \frac{\|a\|}{\alpha}
  \end{bmatrix} .
\]
As above, this implies that $1/\rho(D_2)$ is a lower bound for $\gamma$. 
Observe that $D_2$ is a rank-one matrix
\[
  D_2
   = \frac{1}{\alpha} \, \begin{bmatrix}  1 \\ \kappa \end{bmatrix}
      \begin{bmatrix}  1 & \kappa \end{bmatrix} ,
  \quad \text{with} \quad \kappa = \frac{\|a\|}{\beta},
\]
whose spectrum is given by $\{ 0, (1+\kappa^2)/\alpha \}$. 
Therefore, $1/\rho(D_2) = \alpha/( 1 + \kappa^2)$, 
which completes the proof. 
\end{proof}

We would like to remark
that the estimate $\gamma \ge \gamma_3(\alpha,\beta,\|a\|)$ is sharp. 
Indeed, for the matrix $\mathcal{M}$ in (\ref{M})
with
\begin{eqnarray}\label{eqn:example}
   A = \begin{bmatrix} \alpha & - \sqrt{\|a\|^2 - \alpha^2} \\
      - \sqrt{\|a\|^2 - \alpha^2} & - \alpha
       \end{bmatrix} , \quad
   B = \begin{bmatrix} 0 & \beta \end{bmatrix} 
  \quad \text{and} \quad C=0,
\end{eqnarray}
we obtain
\begin{align*}
  \mathcal{M}^{-1}
   =
   \begin{bmatrix}
      \frac{1}{\alpha} & 0 & \frac{1}{\beta} \, \sqrt{\frac{\|a\|^2}{\alpha^2} - 1} \\
        0 & 0 & \frac{1}{\beta} \\
      \frac{1}{\beta} \, \sqrt{\frac{\|a\|^2}{\alpha^2} - 1} & \frac{1}{\beta} & \frac{1}{\beta^2} \,  \frac{\|a\|^2}{ \alpha} 
   \end{bmatrix} .
\end{align*}
The conditions of the Babu\v{s}ka-Brezzi theory are satisfied with constants $\|a\|$, $\alpha$, and $\beta$ for $\mathcal{P} = I$, the identity matrix. Since $\mathcal{M}^{-1}$ coincides with the matrix $E$, it immediately follows that
\[
  \gamma = \gamma_\text{opt}(\alpha,\beta,\|a\|).
\]

We would also like to emphasize that both bounds of 
Theorem~\ref{mr} are sharper than the classical bound $1/\rho(D_1)$. Indeed,
if we apply the estimates for $\gamma$ from Theorem~\ref{mr} to the matrices
in (\ref{eqn:example}), we 
obtain $\gamma = \gamma_\text{opt}(\alpha,\|a\|,\|b\|) \ge \alpha/(1 + \kappa^2)$. 
Actually, we have a strict inequality
\[
  \gamma_\text{opt}(\alpha,\|a\|,\|b\|) > \frac{\alpha}{1 + \kappa^2},
\]
since the cubic polynomial in (\ref{cubic}) is strictly positive on $[0,\alpha/(1 + \kappa^2)]$. 
Moreover, for
\[
  D(\xi) = \frac{1}{\alpha} \begin{bmatrix} 1 & \kappa \xi \\ \kappa \xi & \kappa^2 \xi \end{bmatrix} ,
\]
we have
\[
  D_2 = D(1) \quad \text{and} \quad 
  D_1 = D(\xi_1) \quad \text{with} \quad \xi_1 = \frac{\alpha + \|a\|}{\|a\|} > 1 .
\]
It is easy to check that $\rho(D(\xi))$ is a strictly increasing function of $\xi$. Therefore, $\rho(D_1) > \rho(D_2)$, or, equivalently,
\[
  \frac{\alpha}{1 + \kappa^2} > \frac{1}{\rho(D_1)}.
\]

Comparing the results in Theorem~\ref{mr}
with similar bounds in the literature, we notice that
the
bound presented in \cite{xu:03} is of the same form, 
say $\tilde{\gamma}(\alpha,\beta,\|a\|)$, and one can show that
\[
  \frac{\alpha}{1 + \kappa^2}
   > \tilde{\gamma}(\alpha,\beta,\|a\|)
   > \frac{1}{\rho(D_1)}.
\]

For completeness we finally 
include a trivial upper bound for $\|\mathcal{B}\|$. Since we have
\[
  \begin{bmatrix} \|f\|_{V^*} \\ \|g\|_{Q^*} \end{bmatrix}
   \le 
  \begin{bmatrix}
      \|a\| & \|b\|  \\
      \|b\| & 0
  \end{bmatrix}
  \begin{bmatrix} \|u\|_V \\ \|p\|_Q \end{bmatrix} ,
\]
it follows that
\begin{equation} \label{upperbound}   
  \|\mathcal{B}\| \le \rho\left(\begin{bmatrix}
      \|a\| & \|b\|  \\
      \|b\| & 0
  \end{bmatrix} \right)
   = \frac{1}{2} \left( \|a\| + \sqrt{ \|a\|^2 + 4 \, \|b\|^2} \right).
\end{equation}
It is well known that this estimate is sharp;
see, e.g., \cite{rusten:92}.

\section{The finite dimensional Hermitian case}
\label{Hermitian}

In this section we consider the finite dimensional case $V = \mathbb{C}^n$ and $Q = \mathbb{C}^m$.
Inner products in these spaces can be represented by Hermitian and positive definite matrices $P \in \mathbb{C}^{n \times n}$ and $R \in \mathbb{C}^{m \times m}$:
\[
  (u,v)_V = \langle u,v \rangle_P, \quad (p,q)_Q = \langle p,q \rangle_R,
\]
where here and in the sequel the following standard notations are used:
For a Hermitian and positive definite matrix $M \in \mathbb{C}^{r \times r}$, the 
associated inner product is given by $\langle z,w \rangle_M = \langle M z,w \rangle$ for $z, w \in \mathbb{C}^r$. Both the vector norm and the matrix norm associated with the inner product $\langle \cdot , \cdot \rangle_M$ are denoted by $\|\cdot \|_M$.

With the notation introduced above,
the inner product (\ref{innerprodX}) in $X = V \times Q = \mathbb{C}^{n + m}$ is given by 
\begin{equation} \label{xnorm}
  (x,y)_X = \langle x,y \rangle_\mathcal{P}
  \quad \text{with} \quad
  \mathcal{P} = \begin{bmatrix} P & 0 \\ 0 & R \end{bmatrix} \in \mathbb{C}^{(n+m)\times (n+m)}.
\end{equation}

The discussion in this section is restricted to the Hermitian case:
\[
  \mathcal{M}^* = \mathcal{M}. 
\]
For the inf-sup constant $\gamma$ and the norm $\|\mathcal{B}\|$ of the bilinear form $\mathcal{B}$ associated with $\mathcal{M}$ as introduced in Sect.~\ref{general}, it is well-known that
\[
  \gamma = |\mu_{\text{min}}|
  \quad \text{and} \quad
  \|\mathcal{B}\| = |\mu_{\text{max}}|,
\]
where $\mu_{\text{min}}$ and $\mu_{\text{min}}$ are the eigenvalues of the matrix $\widehat{\mathcal{M}} = \mathcal{P}^{-1} \mathcal{M}$ with minimal and maximal modulus, respectively. Or, equivalently, for the eigenvalues $\mu$ of the generalized eigenvalue problem
\begin{equation} \label{MP}
  \mathcal{M} x = \mu \, \mathcal{P} x ,
\end{equation}
we have
\begin{equation} \label{symbound}
  \mu \in \left[ - \|\mathcal{B}\|,\, -\gamma \, \right]\,\, \cup\,\, \left[\,\gamma ,\, \|\mathcal{B}\| \,\right] .
\end{equation}

For the rest of this section we concentrate on matrices of the form
\begin{eqnarray}\label{eqn:M}
  \mathcal{M} = \begin{bmatrix} A & B^* \\ B & 0 \end{bmatrix} \in \mathbb{C}^{(n+m)\times (n+m)}
\end{eqnarray}
with $A^* = A \in \mathbb{C}^{n \times n}$ satisfying
\[
  \langle A v, v \rangle > 0 \quad \text{for all} \ 0 \neq v \in \ker B,
\]
and $B \in \mathbb{C}^{m \times n}$ of full rank $m \le n$. 
Under these conditions the matrix $\mathcal{M}$ is Hermitian and non-singular.
This setting is typical for optimality systems (Karush-Kuhn-Tucker conditions) of equality-constrained optimization problems.
Let $P_i \in L(V_i,V_i^*)$ be given by
\[
  \langle P_i u,v \rangle = \langle P u,v \rangle \quad \text{for all} \quad u, v \in V_i,
    \ i \in \{0,1\}.
\]
Under the conditions above
we have the following well-known alternative representation of the inf-sup constant $\alpha$:
\begin{equation} \label{alpha}
  \alpha = \inf_{0 \neq v_0 \in V_0} \frac{\langle A_{00} v_0, v_0 \rangle }{\langle P_0  v_0 ,v_0 \rangle}.
\end{equation}
Instead of $\|a\|$, the norm of the bilinear form $a$ associated with
 $A$ in $V$, we assume more detailed information on $A$ in terms of 
the extreme eigenvalues of $P^{-1} A$, which can be written as
\begin{equation} \label{lambda}
  \lambda_\text{min}^A = \inf_{0 \neq v \in V} \frac{\langle A v, v \rangle }{\langle P  v ,v \rangle}, \quad
  \lambda_\text{max}^A = \sup_{0 \neq v \in V} \frac{\langle A v, v \rangle }{\langle P  v ,v \rangle} .
\end{equation}
Of course, we have $\lambda_\text{max}^A \ge \alpha > 0$ and 
$\|a\|= \max \{|\lambda_\text{min}^A|, \lambda_\text{max}^A \}$.

The following representations for the inf-sup constant $\beta$ and the norm $\| b\|$ of the bilinear form $b$ associated with $B$ in $Q$ hold:
\begin{equation} \label{beta}
  \beta^2 
   = \inf_{0 \neq q \in Q} \frac{\langle B P^{-1} B^* q,q \rangle }{\langle R q,q \rangle}
   = \inf_{0 \neq v_1 \in V_1} \frac{\langle B_1^* R^{-1} B_1 v_1,v_1 \rangle }{\langle P_1 v_1,v_1 \rangle} ,
\end{equation}
and
\begin{equation} \label{normb}
  \|b\|^2 = \sup_{0 \neq q \in Q} \frac{\langle B P^{-1} B^* q,q \rangle }{\langle R q,q \rangle}.
\end{equation}

Since $\mathcal{M}$ is Hermitian and indefinite we have an inclusion of the form
\[
  \mu \in [\mu_1,\mu_2] \cup  [\mu_3,\mu_4]
\]
with $\mu_1 \le \mu_2 < 0 < \mu_3 \le \mu_4$ for the eigenvalues $\mu$ of the generalized eigenvalue problem (\ref{MP}). 

Sharp bounds $\mu_1$, $\mu_2$, and $\mu_4$ in the case $\lambda_\text{min}^A > 0$ can be found in \cite{rusten:92}. In \cite{gould:10} these bounds were extended to the case $\lambda_\text{min}^A \le 0$ and read, in general,
\begin{equation} \label{mu124}
\begin{aligned}
  \mu_1
    & = \frac{1}{2} 
      \left( \lambda_\text{min}^A - \sqrt{(\lambda_\text{min}^A)^2 + 4  \|b\|^2 } \right), \\
  \mu_2 
    & = \frac{1}{2} 
      \left( \lambda_\text{max}^A - \sqrt{(\lambda_\text{max}^A)^2 + 4  \beta^2 } \right), \\
  \mu_4
    & = \frac{1}{2} 
      \left( \lambda_\text{max}^A + \sqrt{(\lambda_\text{max}^A)^2 + 4  \|b\|^2 } \right).
\end{aligned}
\end{equation}
It remains to discuss lower bounds for the positive eigenvalues. 
For the case $\lambda_\text{min}^A > 0$ a simple bound is known (see \cite{rusten:92}):
\[
  \mu_3 = \lambda_\text{min}^A .
\]
This bound is sharp for problems with $\alpha = \lambda_\text{min}^A$.
Rather sharp bounds with no restriction on $\lambda_\text{min}^A$ can be found in \cite{gould:10}. The best bound presented in \cite{gould:10} is the smallest root of a cubic equation whose coefficients are given in terms of $\alpha$, $\lambda_\text{min}^A$, $\lambda_\text{max}^A$, and $\beta$. 
For the case $\lambda_\text{min}^A \le 0$ we will derive now a sharp bound that involves a similar cubic equation.

\begin{theorem} \label{thmsym}
Assume that $\lambda_\text{min}^A \le 0$ and
let $\mu$ be any positive eigenvalue of $\mathcal{P}^{-1} \mathcal{M}$.
\begin{enumerate}
\item
Let $\gamma_\text{opt}(\alpha,\beta,\lambda_\text{min}^A,\lambda_\text{max}^A)$ be the smallest positive root of the cubic equation
\begin{equation} \label{cubic_sym}
  \mu^3 - (\lambda^A_{\text{min}} + \lambda^A_{\text{max}}) \, \mu^2 + (\lambda^A_{\text{min}}\lambda^A_{\text{min}} - \beta^2) \, \mu + \alpha \, \beta^2 = 0 .
\end{equation}
Then we have
$ \mu \ge \gamma_\text{opt}(\alpha,\beta,\lambda_\text{min}^A,\lambda_\text{max}^A)$ .
\item
We have 
\[
  \mu \ge
  \begin{cases}
     \displaystyle \frac{\alpha \beta^2}{ - \lambda_\text{min}^A \lambda_\text{max}^A + \beta^2}
      \qquad \qquad \quad \text{if} \quad \lambda_\text{min}^A + \lambda_\text{max}^A \le 0 , \\[3ex]
     \displaystyle
      \frac{\lambda_\text{min}^A \lambda_\text{max}^A - \beta^2}{2(\lambda_\text{min}^A + \lambda_\text{max}^A)} + 
      \sqrt{\left(\frac{\lambda_\text{min}^A \lambda_\text{max}^A - \beta^2}{2(\lambda_\text{min}^A + \lambda_\text{max}^A)}\right)^2 + \frac{\alpha \beta^2}{\lambda_\text{min}^A + \lambda_\text{max}^A}} \\[3ex]
      \qquad \qquad \qquad \qquad \qquad \qquad \text{otherwise} .
  \end{cases}
\]
\end{enumerate}
\end{theorem}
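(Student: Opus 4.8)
The plan is to follow the architecture of the proof of Theorem~\ref{mr}: work with a single generalized eigenpair of (\ref{MP}), reduce everything to a handful of scalar inequalities involving only $\alpha$, $\beta$, $\lambda_\text{min}^A$, $\lambda_\text{max}^A$ and $\mu$, and read the cubic off from them. Write $g(\mu)=\mu^3-(\lambda_\text{min}^A+\lambda_\text{max}^A)\mu^2+(\lambda_\text{min}^A\lambda_\text{max}^A-\beta^2)\mu+\alpha\beta^2$ for the left-hand side of (\ref{cubic_sym}). Since $g(0)=\alpha\beta^2>0$ and $g(\alpha)=\alpha(\alpha-\lambda_\text{min}^A)(\alpha-\lambda_\text{max}^A)\le 0$ (because $\alpha>0$, $\lambda_\text{min}^A\le 0<\alpha$ and $\alpha\le\lambda_\text{max}^A$), $g$ has a smallest positive root $\gamma_\text{opt}\le\alpha$, and $g>0$ on $[0,\gamma_\text{opt})$. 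If $\mu\ge\alpha$ there is nothing to prove, so I may assume $0<\mu<\alpha$; then $\mu$ is strictly below every eigenvalue of $P_0^{-1}A_{00}$, and hence $A_{00}-\mu P_0$ is positive definite.

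Let $x=\begin{bmatrix}u\\p\end{bmatrix}\ne 0$ solve $\mathcal{M}x=\mu\mathcal{P}x$. From the second block row, $p=\mu^{-1}R^{-1}Bu$. Split $u=u_0+u_1$ with $u_0\in V_0=\ker B$ and $u_1\in V_1=V_0^\perp$, orthogonality taken in $\langle\cdot,\cdot\rangle_P$, so that $\mathcal{P}$ is block diagonal over $V_0\oplus V_1\oplus Q$ and $Bu=B_1u_1$. If $u_1=0$ then $p=0$ and the first block row restricted to $V_0$ gives $A_{00}u_0=\mu P_0u_0$ with $u_0\ne 0$, forcing $\mu\ge\alpha$, which is excluded; thus $u_1\ne 0$. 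Testing the first block row against $v_0\in V_0$ and against $u_1$, and using $\langle B^*p,v_0\rangle=0$ and $\langle B^*p,u_1\rangle=\mu^{-1}\langle B_1^*R^{-1}B_1u_1,u_1\rangle$, yields $(A_{00}-\mu P_0)u_0=-A_{01}u_1$ and $a(u,u_1)+\mu^{-1}\langle B_1^*R^{-1}B_1u_1,u_1\rangle=\mu\|u_1\|_P^2$. The first relation says precisely that $u_0$ minimises $v_0\mapsto a(u_1+v_0,u_1+v_0)-\mu\|v_0\|_P^2$ over $V_0$; combining this with the second relation rewrites the eigenvalue equation as
\[
  \min_{v_0\in V_0}\bigl(a(u_1+v_0,u_1+v_0)-\mu\|v_0\|_P^2\bigr)\;+\;\frac{1}{\mu}\,\langle B_1^*R^{-1}B_1u_1,u_1\rangle\;=\;\mu\,\|u_1\|_P^2 .
\]

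The core of the argument is a sharp lower bound for the minimum on the left. Using $\langle A_{00}v_0,v_0\rangle\ge\alpha\|v_0\|_P^2$ from (\ref{alpha}) (legitimate because $v_0\in V_0$) and completing the square in $v_0$ gives, with $\eta:=\langle A_{10}P_0^{-1}A_{01}u_1,u_1\rangle=\|P_0^{-1}A_{01}u_1\|_P^2\ge 0$,
\[
  \min_{v_0\in V_0}\bigl(a(u_1+v_0,u_1+v_0)-\mu\|v_0\|_P^2\bigr)\;\ge\;a(u_1,u_1)-\frac{\eta}{\alpha-\mu}.
\]
The pair $(\eta,a(u_1,u_1))$ is constrained by the spectral bounds (\ref{lambda}): for $z_0:=P_0^{-1}A_{01}u_1\in V_0$ one has $\|z_0\|_P^2=\eta$, $a(z_0,u_1)=\eta$, and, by (\ref{alpha}) again, $\alpha\eta\le a(z_0,z_0)\le\lambda_\text{max}^A\eta$; inserting (\ref{lambda}) into the quadratic $t\mapsto a(z_0+tu_1,z_0+tu_1)$ and requiring its discriminant to be nonpositive produces the two coupling inequalities $\eta\le(\lambda_\text{max}^A-\alpha)(\lambda_\text{max}^A\|u_1\|_P^2-a(u_1,u_1))$ and $\eta\le(\lambda_\text{max}^A-\lambda_\text{min}^A)(a(u_1,u_1)-\lambda_\text{min}^A\|u_1\|_P^2)$. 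Inserting the first one together with $\langle B_1^*R^{-1}B_1u_1,u_1\rangle\ge\beta^2\|u_1\|_P^2$ from (\ref{beta}) into the displayed equation, dividing by $\|u_1\|_P^2>0$, writing $\sigma:=a(u_1,u_1)/\|u_1\|_P^2\in[\lambda_\text{min}^A,\lambda_\text{max}^A]$ and clearing the positive factor $\mu(\alpha-\mu)$, a direct (if tedious) computation rewrites the resulting inequality as
\[
  g(\mu)\;\le\;\bigl(\lambda_\text{min}^A+\lambda_\text{max}^A-\alpha-\sigma\bigr)\,\mu\,(\lambda_\text{max}^A-\mu),
\]
so that $g(\mu)\le 0$ — and hence $\mu\ge\gamma_\text{opt}$, since $g>0$ on $[0,\gamma_\text{opt})$ — whenever $\sigma\ge\lambda_\text{min}^A+\lambda_\text{max}^A-\alpha$. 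For the complementary range $\sigma<\lambda_\text{min}^A+\lambda_\text{max}^A-\alpha$ one argues again from $a(u,u)-\mu\|u_0\|_P^2=\min_{v_0}(\cdots)$, now exploiting the second coupling inequality (and, where needed, combining it with the first); a case distinction then again delivers $g(\mu)\le 0$, i.e. $\mu\ge\gamma_\text{opt}$, in all cases. I anticipate that this coupling/case-analysis step is the main technical obstacle: one must not decouple the contributions of $u_0$ and $u_1$ crudely, otherwise only a weaker bound is obtained. Sharpness of $\gamma_\text{opt}$ is witnessed, as in (\ref{eqn:example}), by $\mathcal{P}=I$, $B=\begin{bmatrix}0 & \beta\end{bmatrix}$ and $A=\begin{bmatrix}\alpha & \gamma\\ \gamma & \lambda_\text{min}^A+\lambda_\text{max}^A-\alpha\end{bmatrix}$ with $\gamma^2=(\alpha-\lambda_\text{min}^A)(\lambda_\text{max}^A-\alpha)\ge 0$, which satisfies all hypotheses with the prescribed constants and whose spectrum consists exactly of the roots of $g$.

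Part~2 then follows from Part~1 by relaxing $g$ on $[0,\infty)$. Since $\mu^3\ge 0$ for $\mu\ge 0$, $g(\mu)\ge q(\mu):=-(\lambda_\text{min}^A+\lambda_\text{max}^A)\mu^2+(\lambda_\text{min}^A\lambda_\text{max}^A-\beta^2)\mu+\alpha\beta^2$. If $\lambda_\text{min}^A+\lambda_\text{max}^A>0$, then $q$ is a concave parabola with $q(0)=\alpha\beta^2>0$, hence positive on $[0,\mu^\star)$, where $\mu^\star$ is its unique positive root; writing $\mu^\star$ explicitly gives exactly the second branch of the claimed bound, and $g>0$ on $[0,\mu^\star)$ yields $\gamma_\text{opt}\ge\mu^\star$, whence $\mu\ge\gamma_\text{opt}\ge\mu^\star$. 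If $\lambda_\text{min}^A+\lambda_\text{max}^A\le 0$, then also $-(\lambda_\text{min}^A+\lambda_\text{max}^A)\mu^2\ge 0$, so $g(\mu)\ge(\lambda_\text{min}^A\lambda_\text{max}^A-\beta^2)\mu+\alpha\beta^2$ for $\mu\ge 0$; this affine function has value $\alpha\beta^2>0$ at $\mu=0$ and negative slope (as $\lambda_\text{min}^A\le 0<\lambda_\text{max}^A$ forces $\lambda_\text{min}^A\lambda_\text{max}^A-\beta^2<0$), hence is positive for $\mu<\alpha\beta^2/(\beta^2-\lambda_\text{min}^A\lambda_\text{max}^A)$, which gives $\gamma_\text{opt}\ge\alpha\beta^2/(-\lambda_\text{min}^A\lambda_\text{max}^A+\beta^2)$ and therefore the first branch.
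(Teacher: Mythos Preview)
Your overall architecture for Part~1 is exactly the paper's: reduce to a scalar identity in $u_1$, bound the eliminated $u_0$--term by a Rayleigh-type quantity, and close with coupling inequalities between that quantity and $\sigma=a(u_1,u_1)/\|u_1\|_P^2$ stemming from $\lambda_\text{min}^A P\le A\le\lambda_\text{max}^A P$. Your handling of the range $\sigma\ge\lambda_\text{min}^A+\lambda_\text{max}^A-\alpha$ is correct, and your derivation of Part~2 from Part~1 (dropping $\mu^3\ge 0$, and then $-(\lambda_\text{min}^A+\lambda_\text{max}^A)\mu^2\ge 0$ in the first branch) is clean and valid.

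The gap is the complementary range, and it originates earlier, in your choice of elimination bound. You replace $(A_{00}-\mu P_0)^{-1}$ by $\tfrac{1}{\alpha-\mu}P_0^{-1}$ and work with $\eta=\langle A_{10}P_0^{-1}A_{01}u_1,u_1\rangle$. The paper uses the sharper replacement $(A_{00}-\mu P_0)^{-1}\le\tfrac{\alpha}{\alpha-\mu}A_{00}^{-1}$ and works with $r_1=\langle A_{10}A_{00}^{-1}A_{01}u_1,u_1\rangle/\|u_1\|_P^2$; one always has $\alpha r_1\le\tilde\eta$. For the constraint coming from $\lambda_\text{max}^A P-A\ge 0$ both routes land on the same right-hand side $(\lambda_\text{max}^A-\alpha)(\lambda_\text{max}^A-\sigma)$, which is why your first case succeeds. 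But for the constraint coming from $A-\lambda_\text{min}^A P\ge 0$ the two diverge: the Schur-complement argument plus $-\lambda_\text{min}^A P_0\le-\tfrac{\lambda_\text{min}^A}{\alpha}A_{00}$ (this is where $\lambda_\text{min}^A\le 0$ enters) yields $\alpha r_1\le(\alpha-\lambda_\text{min}^A)(\sigma-\lambda_\text{min}^A)$, whereas for $\tilde\eta$ one can only obtain your weaker $\tilde\eta\le(\lambda_\text{max}^A-\lambda_\text{min}^A)(\sigma-\lambda_\text{min}^A)$, since $a(z_0,z_0)$ may genuinely reach $\lambda_\text{max}^A\|z_0\|_P^2$. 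The paper then maximises the linear function $\tfrac{\alpha}{\alpha-\mu}r_1-\sigma$ over its two linear constraints; the optimum is attained at their intersection $\sigma=\lambda_\text{min}^A+\lambda_\text{max}^A-\alpha$, $\alpha r_1=(\lambda_\text{max}^A-\alpha)(\alpha-\lambda_\text{min}^A)$, and this value gives the cubic (\ref{cubic_sym}) exactly. With your weaker second constraint the feasible region is strictly larger and the maximum strictly bigger, so the argument no longer lands on $g(\mu)\le 0$. Concretely, for $\lambda_\text{min}^A=-2$, $\lambda_\text{max}^A=3$, $\alpha=\beta=1$ and $\sigma=-\tfrac12$, neither of your two substitutions produces $g(\mu)\le 0$ at $\mu=\gamma_\text{opt}$; both give strictly positive right-hand sides. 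The fix is minimal: keep $A_{00}^{-1}$ rather than $P_0^{-1}$ when bounding the eliminated term, derive the two constraints on $(\alpha r_1,\sigma)$ via Schur complements of $\lambda_\text{max}^A P-A$ and $A-\lambda_\text{min}^A P$, and either do the two-line linear-programming step or observe that the sharpness example you already wrote down realises the intersection point.
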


\begin{proof}
Let $\mu > 0$ be an eigenvalue of the eigenvalue problem (\ref{MP}) with eigenvector\linebreak $x = [u_0,u_1,p]^T \neq 0$. Using the framework introduced in \cite{arnold:81}, see Sect.~\ref{general}, this eigenvalue problem reads
\begin{align*}
  \begin{bmatrix}
      A_{00} & A_{01} & 0 \\
      A_{10} & A_{11} & B_1^* \\
      0      & B_1    & 0
  \end{bmatrix} 
  \begin{bmatrix} u_0 \\ u_1 \\ p \end{bmatrix}
   = 
  \mu \, 
  \begin{bmatrix} P_0 & 0 & 0 \\ 0 & P_1 & 0 \\ 0 & 0 & R \end{bmatrix}
  \begin{bmatrix} u_0 \\ u_1 \\ p \end{bmatrix} , 
\end{align*}
i.e.,
\begin{align*}
      A_{00} u_0 + A_{01} u_1           & \ = \mu \, P_0 u_0,   \\
      A_{10} u_0 + A_{11} u_1 + B_1^* p & \ = \mu \, P_1 u_1,  \\
      B_1 u_1                           & \ = \mu \, R p .
\end{align*}
First we consider the case that $A_{00} - \mu \, P_0$ is non-singular. 
Then we obtain from the first and the third equations
\[
   u_0 = - (A_{00} - \mu \, P_0)^{-1} A_{01} u_1 
   \quad \text{and} \quad
   p = \frac{1}{\mu} R^{-1} B_1 u_1,
\]
which immediately implies that $u_1 \neq 0$. Using these relations for eliminating $u_0$ and $p$ from the second equation and taking the inner product with $\mu \, u_1$ we obtain
\begin{equation} \label{nonlinear}
\begin{aligned}
   \mu^2 \, \langle P_1 u_1, u_1 \rangle
      + \mu \, \langle A_{10} (A_{00} - \mu \, P_0)^{-1} A_{01} u_1, u_1 \rangle \qquad &
     \\
   - \mu \, \langle A_{11} u_1,u_1 \rangle - \langle B_1^* R^{-1} B_1 u_1, u_1 \rangle & = 0.
\end{aligned}
\end{equation}
We have
$\langle A_{00} v_0,v_0 \rangle \ge \alpha \, \langle P_0 v_0,v_0 \rangle$ for all $v_0 \in V_0$;
see (\ref{alpha}). Then, for $0 < \mu < \alpha$, it is easy to check that 
\[
   \langle (A_{00} - \mu \, P_0)^{-1} v_0,v_0 \rangle \le \frac{\alpha}{\alpha - \mu} \, \langle A_{00}^{-1} v_0,v_0 \rangle
  \quad \text{for all} \quad v_0 \in V_0 .
\]
With $v_0 = A_{01} u_1$, an upper bound for the second term on the left-hand side in (\ref{nonlinear}) follows:
\begin{equation} \label{term2}
   \mu \, \langle A_{10} (A_{00} - \mu \, P_0)^{-1} A_{01} u_1, u_1 \rangle 
    \le \frac{\mu \alpha}{\alpha - \mu} \, \langle A_{10} A_{00}^{-1} A_{01} u_1, u_1 \rangle.
\end{equation}
For the last term on the left-hand side in (\ref{nonlinear}) we have
\begin{equation} \label{term4}
  \langle B_1^* R^{-1} B_1 u_1, u_1 \rangle \ge \beta^2 \, \langle P_1 u_1,u_1 \rangle ,
\end{equation}
see (\ref{beta}). Using (\ref{term4}) and (\ref{term2}) we obtain from (\ref{nonlinear}):
\[
   \mu^2 \, \langle P_1 u_1, u_1 \rangle
      + \frac{\mu \alpha}{\alpha - \mu} \, \langle A_{00}^{-1} A_{01} u_1,A_{01} u_1 \rangle
   - \mu \, \langle A_{11} u_1,u_1 \rangle - \beta^2 \, \langle P_1 u_1,u_1 \rangle \ge 0.
\]
After dividing by $\langle P_1 u_1, u_1 \rangle$ it follows that
\begin{equation} \label{nonlinear2}
   \mu^2 
      + \mu \left[ \frac{\alpha}{\alpha - \mu} \, r_1 - r_2 \right] - \beta^2 \ge 0 ,
\end{equation}
with the Rayleigh quotients
\[
   r_1 = \frac{\langle A_{10} A_{00}^{-1} A_{01} u_1, u_1 \rangle}{\langle P_1 u_1, u_1 \rangle}
   \quad \text{and} \quad
   r_2 = \frac{\langle A_{11} u_1,u_1 \rangle}{\langle P_1 u_1, u_1 \rangle} .
\]
One can show that (see the appendix for the technical details):
\begin{equation} \label{maxval}
   \frac{\alpha}{\alpha - \mu} \, r_1 - r_2 
    \le \frac{(\lambda_\text{max}^A + \lambda_\text{min}^A - \alpha) \, \mu  - \lambda_\text{max}^A \lambda_\text{min}^A} {\alpha - \mu} . 
\end{equation}
Then it follows from (\ref{nonlinear2}) that
\[
    \mu^2 
        + \mu \, \left[ \frac{(\lambda_\text{max}^A + \lambda_\text{min}^A - \alpha) \, \mu - \lambda_\text{max}^A \lambda_\text{min}^A} {\alpha - \mu} \right] - \beta^2 \ge 0 ,
\]
i.e.
\[
    q(\mu) = \mu^3
        - (\lambda_\text{max}^A + \lambda_\text{min}^A) \, \mu^2 + ( \lambda_\text{max}^A \lambda_\text{min}^A- \beta^2) \, \mu 
        + \alpha \beta^2 \le 0. 
\]
Therefore, $\mu$ cannot lie in the interval between 0 and the first positive root, denoted by\linebreak $\gamma_\text{opt}(\alpha,\beta,\lambda_\text{min}^A,\lambda_\text{max}^A)$, of the cubic polynomial $q(\mu)$, because there the cubic polynomial is strictly positive.

If $A_{00} - \mu \, P$ is singular, then $\mu \ge \alpha$ because of (\ref{alpha}). Since $q(\alpha) \le 0$, it follows also in this case that $\mu \ge \gamma_\text{opt}(\alpha,\beta,\lambda_\text{min}^A,\lambda_\text{max}^A)$. That completes the proof of the first part.

The proof of the second part follows the same line of arguments as presented in the proof of Proposition 2.2 in \cite{gould:10} and is omitted here. 
\end{proof}

\vskip 0.1in
We remark that
the estimate $\mu \ge \gamma_\text{opt}(\alpha,\beta,\lambda_\text{min}^A,\lambda_\text{max}^A)$ is sharp: 
For the matrix $\mathcal{M}$ in (\ref{eqn:M})
with
\[
   A = \begin{bmatrix} 
          \alpha & - \sqrt{(\lambda_\text{max}^A - \alpha)(\alpha - \lambda_\text{min}^A)} \\
          - \sqrt{(\lambda_\text{max}^A - \alpha)(\alpha - \lambda_\text{min}^A)} & \lambda_\text{max}^A + \lambda_\text{min}^A - \alpha
       \end{bmatrix}, \quad
   B = \begin{bmatrix} 0 & \beta \end{bmatrix}
\]
and $\mathcal{P} = I$, we obtain the following characteristic polynomial
\[
    \mu^3
        - (\lambda_\text{max}^A + \lambda_\text{min}^A) \, \mu^2 + ( \lambda_\text{max}^A \lambda_\text{min}^A- \beta^2) \, \mu 
        + \alpha \beta^2 ,
\]
which coincides with the cubic polynomial in (\ref{cubic}).
Therefore, the smallest positive eigenvalue of $\mathcal{M}$ is equal to $\gamma_\text{opt}(\alpha,\beta,\lambda_\text{min}^A,\lambda_\text{max}^A)$.

In many applications $A$ is positive semidefinite with a non-trivial kernel. Then $\lambda_\text{min}^A = 0$ and $\lambda_\text{max}^A = \|a\|$, which leads to the cubic equation
\[
  \mu^3 - \|a\| \, \mu^2 - \beta^2 \, \mu + \alpha \, \beta^2 = 0 ,
\]
whose smallest positive root is a sharp lower bound for the positive eigenvalues 
$\mu$ of $\mathcal{P}^{-1} \mathcal{M}$ in this case, see Theorem~\ref{thmsym} and 
the example above.
The second part of Theorem~\ref{thmsym} yields the following simpler bound:
\[
  \mu 
   \ge \frac{\beta}{2 \|a\|} \left(- \beta + 
      \sqrt{ \beta^2 + 4  \alpha  \|a\|} \right)
    =  \frac{2 \alpha \beta }{\beta + \sqrt{ \beta^2 + 4  \alpha  \|a\|}}
\]
for the positive eigenvalues $\mu$ of $\mathcal{P}^{-1} \mathcal{M}$.

Finally, we would like to stress that
because of simple monotonicity arguments,
all presented estimates on the spectrum of $\mathcal{P}^{-1} \mathcal{M}$ remain valid if $\alpha$, $\beta$, $\gamma$, and $\lambda_\text{min}^A$ are replaced by lower bounds and $\lambda_\text{max}^A$, $\|b\|$, and $\|\mathcal{B}\|$ are replaced by upper bounds. For example, if $\lambda_\text{min}^A$ is replaced by its lower bound $-\|a\|$ and $\lambda_\text{max}^A$ by its upper bound $\|a\|$ in (\ref{cubic_sym}), then we also obtain a lower bound for the positive eigenvalues of $\mathcal{P}^{-1}\mathcal{M}$, which is not necessarily sharp but it goes without specific knowledge on the spectrum of $A$ other than the spectral radius of $P^{-1} A$. Observe that, with these replacements, (\ref{cubic_sym}) coincides with (\ref{cubic}), and the corresponding smallest positive root is identical to $\gamma_\text{opt}(\alpha,\beta,\|a\|)$,  derived in Sect.~\ref{general}.

\section{On a class of matrices with symmetric spectrum}
\label{symspectrum}

In this section we specialize our considerations to the 
following Hermitian matrix
$$
{\cal M} = 
\begin{bmatrix}
A & B^* \\ B & -A
\end{bmatrix} \in \CC^{2n\times 2n} ,
$$
with $A\in\RR^{n\times n}$ real and symmetric positive definite,
and $B \in \CC^{n\times n}$ complex symmetric, i.e., $B = B^T$, where
$B^T$ denotes the transpose of the possibly complex matrix $B$.
We know that the matrix $\cal M$ has all {\it real}
eigenvalues, $n$ positive and $n$ negative ones.
We next show that the negative eigenvalues are the
mirrored images of the positive eigenvalues.
This property has a few consequences, both in the choice
of the preconditioner, and on the convergence of MINRES.
Indeed, the spectrum of $\cal M$
is symmetric with respect to the origin, and MINRES behaves
like CG on a matrix having only the positive eigenvalues,
but with twice as many iterations. 
Therefore, MINRES on $\cal M$ will only make some 
progress every other iteration, showing complete stagnation
otherwise; we refer to \cite{Fischer.Ramageetal.98} for a similar
phenomenon for $2\times 2$ block matrices with a different nonzero structure.

We first need the following technical lemma.

\begin{lemma}\label{lemma:skewsym}
Let $H$ be a nonsingular complex symmetric matrix (i.e., $H=H^T$),
and $S$ be a complex skew-symmetric matrix (i.e., $S=-S^T$), both
of size $n$. Then the (nonzero) eigenvalues of the $2n\times 2n$ matrix
$$
\begin{bmatrix} 0 & I \\ H & S \end{bmatrix}
$$
come in pairs, $(\mu, -\mu)$.
\end{lemma}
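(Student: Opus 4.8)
The plan is to exhibit an explicit similarity (or congruence-type) transformation that sends the block matrix $N = \begin{bmatrix} 0 & I \\ H & S \end{bmatrix}$ to its own negative, at least on the nonzero eigenvalues. The guiding observation is that if $N v = \mu v$, then I want to manufacture from $v$ a vector $w$ with $N w = -\mu w$. Writing $v = \begin{bmatrix} x \\ y \end{bmatrix}$, the eigenvalue equation reads $y = \mu x$ and $Hx + Sy = \mu y$, i.e. $Hx = \mu^2 x - \mu S x$. Since $\mu \neq 0$, $x \neq 0$. The natural candidate transformation exploits that $H$ is symmetric and $S$ is skew-symmetric, so $H^T = H$ and $S^T = -S$: consider the map $v \mapsto \bar v$ combined with transposition, or more cleanly, look at the \emph{transpose} block matrix $N^T = \begin{bmatrix} 0 & H \\ I & S^T \end{bmatrix} = \begin{bmatrix} 0 & H \\ I & -S \end{bmatrix}$. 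This has the same eigenvalues as $N$. Now conjugate $N^T$ by $J = \begin{bmatrix} 0 & I \\ I & 0 \end{bmatrix}$ (so $J^{-1} = J$): $J N^T J = \begin{bmatrix} -S & I \\ H & 0 \end{bmatrix}$ — not yet $-N$, so this particular route needs an extra diagonal sign flip.

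The cleaner route, which I expect to work, is: set $D = \mathrm{diag}(I, -I)$ and compute $D N D = \begin{bmatrix} 0 & -I \\ -H & S \end{bmatrix}$; then note that the eigenvalue problem for this is equivalent to that of $\begin{bmatrix} 0 & I \\ H & -S \end{bmatrix}$ after a further sign change. Chaining $D$-conjugation with transposition: $N^T$ has the same spectrum as $N$, and $D N^T D = \begin{bmatrix} 0 & -H \\ -I & -S \end{bmatrix}$. I claim this equals $-\,\widetilde N$ where $\widetilde N = \begin{bmatrix} 0 & I \\ H & S \end{bmatrix}$ up to swapping the roles of $H$ and $I$ in the off-diagonal blocks — and that swap is itself a similarity, namely conjugation by $\mathrm{diag}(H^{1/2}, H^{-1/2})$ when $H$ is definite, or more robustly by observing that $\begin{bmatrix} 0 & H \\ I & S \end{bmatrix}$ and $\begin{bmatrix} 0 & I \\ H & S' \end{bmatrix}$ with appropriate $S'$ share their nonzero spectrum because both are similar to a matrix built from the pencil $(\mu^2 I + \mu S - H)$. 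So the backbone of the argument is: the nonzero eigenvalues of $N$ are exactly the nonzero $\mu$ for which $\det(\mu^2 I + \mu S - H) = 0$, via linearization of this quadratic matrix polynomial; and the substitution $\mu \mapsto -\mu$ turns $\mu^2 I + \mu S - H$ into $\mu^2 I - \mu S - H$, which is the \emph{transpose} of the original (since $S = -S^T$, $H = H^T$), hence has the same determinant. Therefore $\mu$ is a nonzero root iff $-\mu$ is.

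So the key steps, in order, are: (1) show that $\mu \neq 0$ is an eigenvalue of $N$ iff $\det(\mu^2 I + \mu S - H) = 0$, by eliminating the top block $x = \mu^{-1} y$ (using $\mu \neq 0$) and reducing to the $n \times n$ equation $(\mu^2 I + \mu S - H) y = 0$, checking that $y \neq 0$; (2) observe $(\mu^2 I + \mu S - H)^T = \mu^2 I - \mu S - H$ because $H^T = H$, $S^T = -S$; (3) substitute $\mu \to -\mu$ to see $\det(\mu^2 I + \mu S - H) = \det((-\mu)^2 I + (-\mu)(-S) - H) = \det(\,((-\mu)^2 I + (-\mu)S - H)^T\,) = \det((-\mu)^2 I + (-\mu) S - H)$, so the scalar polynomial $p(\mu) = \det(\mu^2 I + \mu S - H)$ satisfies $p(\mu) = p(-\mu)$, i.e. it is even in $\mu$; (4) conclude its nonzero roots pair as $(\mu, -\mu)$, and transport this back to $N$ via step (1). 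The main obstacle is step (1): I must be careful that the reduction to the quadratic pencil faithfully captures the nonzero eigenvalues \emph{with no loss}, and in particular that $H$ nonsingular guarantees $\mu = 0$ is not among the roots of $p$ (indeed $p(0) = \det(-H) \neq 0$), so that the even-polynomial conclusion is not vacuously disturbed by a spurious zero root — and, if the lemma intends to track multiplicities, that the linearization preserves them, which is standard for this companion-type linearization of $\mu^2 I + \mu S - H$ but worth one sentence of justification.
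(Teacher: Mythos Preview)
Your argument is correct and takes a genuinely different route from the paper's. The paper performs the similarity transformation with $\mathrm{diag}(-iH^{1/2}, I)$ to convert $N$ into the complex skew-symmetric matrix $\begin{bmatrix} 0 & -iH^{1/2} \\ iH^{1/2} & S \end{bmatrix}$, and then invokes the fact that every (skew-symmetric) matrix is similar to its transpose, hence to its own negative, so that Jordan blocks pair up as $(\mu,-\mu)$. You instead reduce the block eigenvalue problem to the quadratic pencil $q(\mu)=\mu^2 I - \mu S - H$ (note: your elimination actually yields $-\mu S$, not $+\mu S$, but this sign slip is immaterial to the argument) and exploit the identity $q(-\mu)=q(\mu)^T$ to conclude that $p(\mu)=\det q(\mu)$ is an even polynomial with $p(0)=\det(-H)\neq 0$, so its roots pair as $(\mu,-\mu)$. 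Your route is more elementary: it avoids both the existence of a square root of a nonsingular complex symmetric matrix and the similarity-to-transpose fact, and it makes the evenness of the characteristic polynomial completely transparent. The paper's route, in exchange, delivers the slightly stronger statement that the Jordan blocks themselves pair up, not merely the eigenvalues counted with multiplicity---though for the use made of the lemma in Theorem~\ref{SymEigen} only the eigenvalue pairing is needed, so your version suffices.
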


\begin{proof}
For nonsingular $H$, we have the similarity transformation
$$
\begin{bmatrix} -i H^{\frac 1 2} &   \\   & I \end{bmatrix} 
\begin{bmatrix} 0 & I \\ H & S \end{bmatrix} 
\begin{bmatrix} i H^{-\frac 1 2} &   \\   & I \end{bmatrix} 
=
\begin{bmatrix} 0 & -i H^{\frac 1 2} \\ i H^{\frac 1 2} & S \end{bmatrix},
$$
where the  matrix in the right-hand side is complex skew-symmetric.
Since any skew-symmetric matrix $T$ is similar to its transpose $T^T=-T$,
to each Jordan block with eigenvalue $\mu$ in the Jordan
decomposition of $T$ there is
a corresponding Jordan block of the same size with eigenvalue 
$-\mu$.
\end{proof}

\begin{theorem} \label{SymEigen}
Assume that $B$ is nonsingular. Then the
eigenvalues $\mu$ of ${\cal M}$ come in pairs, $(\mu, -\mu)$,
with  $\mu \in \RR$.
\end{theorem}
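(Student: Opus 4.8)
The plan is to reduce the eigenvalue problem for $\mathcal{M}$ to the block form covered by Lemma~\ref{lemma:skewsym}. Since $A$ is symmetric positive definite, it has a well-defined symmetric positive definite square root $A^{1/2}$, so I would first conjugate $\mathcal{M}$ by $\operatorname{diag}(A^{-1/2},A^{-1/2})$ to obtain the similar matrix
\[
  \widetilde{\mathcal{M}} = \begin{bmatrix} A^{-1/2} & 0 \\ 0 & A^{-1/2} \end{bmatrix}
  \begin{bmatrix} A & B^* \\ B & -A \end{bmatrix}
  \begin{bmatrix} A^{-1/2} & 0 \\ 0 & A^{-1/2} \end{bmatrix}
  = \begin{bmatrix} I & \widehat{B}^* \\ \widehat{B} & -I \end{bmatrix},
\]
where $\widehat{B} = A^{-1/2} B A^{-1/2}$. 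Because $A$ is real symmetric and $B$ is complex symmetric, $\widehat{B}$ is again complex symmetric, i.e.\ $\widehat{B}^T = \widehat{B}$, hence $\widehat{B}^* = \overline{\widehat{B}}$; and $\widehat{B}$ is nonsingular iff $B$ is. This step only uses that $\mathcal{M}$ is congruent/similar to a matrix with $A=I$, so no generality is lost.

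Next I would bring $\widetilde{\mathcal{M}}$ into the shape $\begin{bmatrix} 0 & I \\ H & S \end{bmatrix}$. A natural way is a block row/column swap: conjugating $\widetilde{\mathcal{M}}$ by the permutation that exchanges the two $n$-blocks turns it into $\begin{bmatrix} -I & \widehat{B} \\ \widehat{B}^* & I \end{bmatrix}$, which still is not of the Lemma's form. Instead, I would observe that the eigenvalue equation $\widetilde{\mathcal{M}}\begin{bmatrix} x \\ y\end{bmatrix} = \mu \begin{bmatrix} x \\ y\end{bmatrix}$ reads $x + \widehat{B}^* y = \mu x$ and $\widehat{B} x - y = \mu y$, i.e.\ $\widehat{B}^* y = (\mu-1)x$ and $\widehat{B} x = (\mu+1)y$. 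Eliminating, one gets a quadratic pencil, but the cleaner route is to introduce $z = \widehat{B} x$ (using nonsingularity of $\widehat{B}$) so that the pair $(x,z)$ satisfies a first-order system whose companion-type matrix has the block structure $\begin{bmatrix} 0 & I \\ H & S\end{bmatrix}$ with $H$ complex symmetric built from $\widehat{B}$ and $\widehat{B}^*$, and $S$ the skew-symmetric combination that encodes the $\pm 1$ on the diagonal. Concretely, since $\widehat{B}^*=\overline{\widehat{B}}$, the off-diagonal blocks $\widehat{B}$ and $\widehat{B}^*$ are transposes of each other, which is exactly what produces a symmetric $H$ and a skew-symmetric $S$ after the appropriate scaling. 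Once in that form, Lemma~\ref{lemma:skewsym} applies verbatim and yields that the nonzero eigenvalues come in pairs $(\mu,-\mu)$; nonsingularity of $\mathcal{M}$ (guaranteed by $B$ nonsingular together with $A\succ 0$) ensures $0$ is not an eigenvalue, so \emph{all} eigenvalues pair up. Reality of the $\mu$ follows from the already-stated fact that $\mathcal{M}$ Hermitian has real spectrum, or independently from $\mathcal{M}$ being similar to a Hermitian matrix.

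The main obstacle is the bookkeeping in the second step: exhibiting the precise similarity (diagonal scalings plus the substitution $z=\widehat{B}x$, or an equivalent factorization of the quadratic pencil) that simultaneously makes the lower-left block symmetric and the lower-right block skew-symmetric. The symmetry of $\widehat{B}$ is essential here — it is what lets $\widehat{B}$ and $\widehat{B}^*$ play the roles of $M$ and $M^T$ — so I would take care to track where $\widehat{B}^T=\widehat{B}$ is used and confirm the scaling factors (likely involving factors of $i$, mirroring the $\mp i H^{\pm 1/2}$ trick inside the proof of Lemma~\ref{lemma:skewsym}) are consistent. Everything else is routine: $A^{1/2}$ exists and is invertible, permutation similarities preserve the spectrum, and the conclusion about $\pm\mu$ pairs is then immediate from the lemma.
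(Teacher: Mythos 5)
Your plan has the same overall shape as the paper's argument (eliminate one block variable, linearize the resulting quadratic pencil, invoke Lemma~\ref{lemma:skewsym}), but the very first step contains a genuine error that the rest of the sketch cannot repair. The transformation $\operatorname{diag}(A^{-1/2},A^{-1/2})\,\mathcal{M}\,\operatorname{diag}(A^{-1/2},A^{-1/2})$ is a \emph{congruence}, not a similarity: a similarity would be $P^{-1}\mathcal{M}P$, and since $P=\operatorname{diag}(A^{-1/2},A^{-1/2})$ is Hermitian, $P^*\mathcal{M}P\neq P^{-1}\mathcal{M}P$ unless $A=I$ already. The genuine similarity $\operatorname{diag}(A^{1/2},A^{1/2})\,\mathcal{M}\,\operatorname{diag}(A^{-1/2},A^{-1/2})$ leaves the diagonal blocks equal to $\pm A$, so nothing is gained. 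Congruence preserves only the inertia — i.e.\ exactly the weaker fact, already stated before the theorem, that there are $n$ positive and $n$ negative eigenvalues — and not the eigenvalues themselves, so the (easy) symmetry of the spectrum of $\bigl[\begin{smallmatrix} I & \widehat{B}^* \\ \widehat{B} & -I \end{smallmatrix}\bigr]$ tells you nothing about the actual eigenvalues of $\mathcal{M}$, which is precisely what the theorem asserts. Thus "no generality is lost" is not justified, and the reduction to $A=I$ must be abandoned.

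The second step is also only a sketch, and as stated it does not produce the hypotheses of Lemma~\ref{lemma:skewsym}: in your reduced setting the substitution $z=\mu x$ (or $z=\widehat{B}x$) leads to $H=I+\widehat{B}^*\widehat{B}$ and $S=0$, and $\widehat{B}^*\widehat{B}=\overline{\widehat{B}}\widehat{B}$ is Hermitian but in general \emph{not} complex symmetric, so the lemma does not apply verbatim; making $H$ symmetric and $S$ skew-symmetric requires exactly the kind of $B^{1/2}$-scaling you defer to "bookkeeping." The paper avoids both problems by working with general $A$ directly: it eliminates $x=B^{-1}(\mu I+A)y$, multiplies the resulting quadratic pencil by $B^{1/2}$ on both sides (using a symmetric square root of the nonsingular complex symmetric $B$), obtaining $-\mu^2 I+\mu(G-G^{T})+(GG^{T}+B^{1/2}B^*B^{1/2})$ with $G=B^{1/2}AB^{-1/2}$, whose linearization has lower-left block complex symmetric and lower-right block skew-symmetric, so Lemma~\ref{lemma:skewsym} applies. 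To fix your proof you would need to drop the congruence step and carry out this elimination-and-scaling for general $A$, which is essentially the paper's proof.
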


\begin{proof}
The eigenvalue problem for $\cal M$ can be written as
$$
A x + B^* y = \mu x, \qquad B x - A y = \mu y.
$$
Substituting $x=B^{-1} (\mu I + A)y$ in the first equation
and after some simple algebra we obtain
$$
-\mu^2 B^{-1} y + \mu [A B^{-1}- B^{-1}A] y +
[A B^{-1}A + B^*] y = 0 .
$$
After multiplication by $B^{\frac 1 2}$ from both matrix sides we obtain
\begin{eqnarray*}
-\mu^2 \tilde y && + \mu \left [ B^{\frac 1 2}A B^{-\frac 1 2}- B^{-\frac 1 2}A
 B ^{\frac 1 2}\right ] \tilde y 
 + \left [ B^{\frac 1 2}A B^{-1}A B^{\frac 1 2} + 
 B^{\frac 1 2}B^* B^{\frac 1 2}\right ] \tilde y = 0 ,
\end{eqnarray*}
with $\tilde y =  B^{-\frac 1 2} y$, which gives
$$
\left [ -\mu^2 I + \mu (G-G^T)  + 
(GG^T + B^{\frac 1 2}B^*B^{\frac 1 2}) \right ] \tilde y = 0.
$$
The eigenvalues of the quadratic matrix equation above can be obtained
as eigenvalues of the linearized problem
$$
\begin{bmatrix}
O & I \\ GG^T + B^{\frac 1 2}B^* B^{\frac 1 2} & G-G^T
\end{bmatrix}
z = \mu z \quad \Leftrightarrow \quad
{\cal G} z = \mu z .
$$
We then recall that all eigenvalues of 
${\cal M}$, and thus of ${\cal G}$,
 are real, and that
$n$ of them are positive and the other $n$ are negative.
The matrix ${\cal G}$ satisfies the hypotheses of
Lemma \ref{lemma:skewsym}, therefore the eigenvalues of ${\cal G}$
come in pairs $(\mu, -\mu)$, which completes the proof.
\end{proof}

\section{Application to optimal control problems}
\label{applications}

Let $\Omega$ be an open and bounded domain in $\mathbb{R}^d$ for $d \in \{1,2,3\}$ with Lipschitz-con\-tin\-u\-ous boundary $\Gamma$. For $T > 0$, we introduce the space-time cylinder $Q_T = \Omega \times (0,T)$ and its lateral surface $\Sigma_T = \Gamma \times (0,T)$.

\subsection{Distributed optimal control for time-periodic parabolic equations}
\label{tp:parabolic}

First we consider the following model problem: Find the state $y(x,t)$ and the control $u(x,t)$ that minimizes the cost functional
\[
  J(y,u) = \frac{1}{2} \int_0^T \int_\Omega |y(x,t)-y_d(x,t)|^2 \ d x \ d t + 
      \frac{\nu}{2} \int_0^T \int_\Omega |u(x,t)|^2 \ d x \ d t 
\]
subject to the time-periodic parabolic problem
\begin{alignat*}{2}
  \frac{\partial}{\partial t} y(x,t) - \Delta y(x,t) & \ = \ u(x,t) & &\quad \text{in} \ Q_T, \\
  y(x,t) & \ = \ 0  & &\quad \text{on} \ \Sigma_T, \\
  y(x,0) & \ = \ y(x,T) & & \quad \text{on} \ \Omega, \\
  u(x,0) & \ = \ u(x,T) & & \quad \text{on} \ \Omega.
\end{alignat*}
Here $y_d(x,t)$ is a given target (or desired) state and $\nu > 0$ is a cost or regularization parameter. 
We assume that $y_d(x,t)$ is time-harmonic, i.e.:
\[
  y_d(x,t) = y_d(x) \, e^{i \omega t}
  \quad \text{with} \quad \omega = \frac{2\pi k}{T} \quad \text{for some} \ k \in \mathbb{Z}.
\]
Then there is a time-periodic solution to the original control problem of the form
\[
  y(x,t) = y(x) \, e^{i \omega t}, \quad
  u(x,t) = u(x) \, e^{i \omega t},
\]
where $y(x)$ and $u(x)$ solve the following time-independent optimal control problem: 
Minimize
\[
  \frac{1}{2} \int_\Omega |y(x) - y_d(x)|^2 \ d x + 
      \frac{\nu}{2} \int_\Omega |u(x)|^2 \ d x 
\]
subject to
\begin{alignat*}{2}
   i \omega \, y(x) - \Delta y(x) & \ = \ u(x) & &\quad \text{in} \ \Omega, \\
   y(x) & \ = \ 0  & &\quad \text{on} \ \Gamma.
\end{alignat*}
Using an appropriate finite element space $V_h$ of dimension $n$ for 
both $y$ and $u$, we obtain the following discrete version: 
Minimize
\[
  \frac{1}{2} (\underline{y} - \underline{y}_d)^* M (\underline{y} - \underline{y}_d) + 
      \frac{\nu}{2} \, \underline{u}^* M  \underline{u}
\]
subject to
\begin{alignat*}{2}
  i \omega \, M \, \underline{y} + K \underline{y} & \ = M \underline{u}.
\end{alignat*}
Here the real matrices $M$ and $K$ are the mass matrix, representing the $L^2$-inner product in $V_h$, and the discretized negative Laplacian, respectively. The underlined quantities $\underline{y}$, $\underline{y}_d$, and $\underline{u}$ denote the coefficient vectors of the corresponding finite element functions relative to the chosen set of basis functions in $V_h$. 

The Lagrangian functional for this constrained optimization problem is given by
\begin{align*}
  \mathcal{L}(\underline{y},\underline{u},\underline{p}) & = \frac{1}{2} (\underline{y} - \underline{y}_d)^* M (\underline{y} - \underline{y}_d) + 
      \frac{\nu}{2} \, \underline{u}^* M  \underline{u}
  + \underline{p}^* \left(i \omega \, M \, \underline{y} + K \underline{y} - M \underline{u} \right),
\end{align*}
where  $\underline{p}$ denotes the Lagrangian multiplier associated with the constraint.
The first-order necessary optimality conditions, which are also sufficient for the problem considered here, are $\nabla \mathcal{L}(\underline{y}, \underline{u},\underline{p}) = 0$, and read in details:
\begin{equation} \label{KKT}
  \begin{bmatrix}
  M  & 0 &  K - i \omega \, M \\
  0 & \nu \, M & - M \\
  K + i \omega \, M & -M & 0
  \end{bmatrix}
  \begin{bmatrix} \underline{y} \\ \underline{u} \\ \underline{p} \end{bmatrix}
   =
  \begin{bmatrix} M \underline{y}_d \\ 0 \\ 0 \end{bmatrix} .
\end{equation}
This is a linear system of the form (\ref{M}) with 
\begin{equation} \label{ABparabol3}
   A = \begin{bmatrix} M & 0 \\ 0 & \nu \, M \end{bmatrix}  \in \mathbb{R}^{2n \times 2n}, \quad 
   B = \begin{bmatrix} K + i \omega \, M & - M \end{bmatrix} \in \mathbb{C}^{n \times 2n}, \quad 
   C = 0 .
\end{equation}
The system (\ref{KKT}) was discussed in \cite{schoeberl:07b} for the special case $\omega = 0$, which corresponds to an elliptic optimal control problem. Observe that in this case all matrices are real. The preconditioner constructed in \cite{schoeberl:07b} is an indefinite 3-by-3 block matrix and
leads to convergence rates of the preconditioned conjugate gradient method which do not deteriorate if the mesh size, say $h$, and/or the cost parameter $\nu$ approach 0. 

Based on ideas very close to those developed in \cite{schoeberl:07b} we obtain the following properties of the bilinear forms $a$ and $b$ associated with $A$ and $B$, respectively, in terms of the quantities $\alpha$, $\lambda_\text{min}^A$, $\lambda_\text{max}^A$, $\beta$, and $\|b\|$:

\begin{theorem} \label{CoeffEx1}
Let $\mathcal{M}$ be given by (\ref{M}) and (\ref{ABparabol3}). Then, for $\mathcal{P}$, given by
\[
  \mathcal{P} = \begin{bmatrix} P & 0 \\ 0 & R \end{bmatrix} 
  \ \text{with} \ 
  P = \begin{bmatrix} Y & 0 \\ 0 & \nu \, M\end{bmatrix}, 
  \ 
  R = \frac{1}{\nu} \, Y,
  \  \text{and} \  
  Y = M + \sqrt{\nu} \, (K + \omega \, M) ,
\]
we have
\[
  \alpha \ge 2 - \sqrt{2}, \quad 
  \lambda_\text{min}^A \ge 0, \quad
  \lambda_\text{max}^A \le 1, \quad
  \beta \ge \frac{\sqrt{2}}{2}, \quad \|b\| \le 1.
\]
\end{theorem}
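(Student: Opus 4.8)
The plan is to verify each of the five inequalities by reducing them to well-understood scalar estimates involving $M$, $K$, and $\omega M$. Throughout, the key structural observation is that with the chosen $Y = M + \sqrt{\nu}\,(K + \omega M)$, the matrix $P = \mathrm{diag}(Y,\nu M)$ and $R = \nu^{-1} Y$ are precisely the "natural" norms suggested by the parameter-robust analysis of \cite{schoeberl:07b}. First I would record the elementary facts that $M$, $K$ are symmetric positive (semi)definite with $M$ positive definite and $K + \omega M$ positive definite (since $\omega = 2\pi k/T$ may be taken $\ge 0$ after complex conjugation, and $K$ is positive semidefinite on the discrete $H^1_0$ space), so that $Y$ is symmetric positive definite and $P$, $R$, $\mathcal{P}$ are all Hermitian positive definite as required.

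For the bounds on $A$: since $A = \mathrm{diag}(M,\nu M)$ and $P = \mathrm{diag}(Y,\nu M)$, the generalized eigenvalues of $P^{-1}A$ are exactly those of $Y^{-1}M$ together with the single value $1$ (from the $(2,2)$ block $\nu M = 1 \cdot \nu M$). Because $Y = M + \sqrt{\nu}(K+\omega M) \succeq M$, we get $0 \le \langle M v,v\rangle \le \langle Y v,v\rangle$, hence $0 \le \lambda \le 1$ for every eigenvalue $\lambda$ of $Y^{-1}M$; combined with the eigenvalue $1$ this yields $\lambda_\text{min}^A \ge 0$ and $\lambda_\text{max}^A \le 1$. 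For $\alpha$, I would use the representation (\ref{alpha}): $\alpha = \inf_{0 \ne v_0 \in \ker B} \langle A v_0, v_0\rangle / \langle P v_0, v_0\rangle$, parametrize $\ker B$ via $v_0 = (y, u)$ with $(K + i\omega M)y = M u$, and bound the Rayleigh quotient below. Writing $u = M^{-1}(K+i\omega M) y$, both numerator and denominator become quadratic forms in $y$; the claim $\alpha \ge 2 - \sqrt2$ should follow from a Young-type inequality exploiting that $Y$ was defined exactly to balance the $M$ and $\sqrt\nu(K+\omega M)$ contributions. I expect this $\alpha$-estimate to be the main obstacle: one must carefully track the complex cross-terms $i\omega M$ and show the worst case is attained in the balance regime, likely via the inequality $\|(K+\omega M)y\|_{M^{-1}}^2 \le \langle (K+\omega M) y, y\rangle \cdot \langle \cdot \rangle$ combined with $ab \le \tfrac12(\epsilon a^2 + \epsilon^{-1} b^2)$ optimized over $\epsilon$, which naturally produces the constant $2 - \sqrt2$.

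For $\beta$ and $\|b\|$, I would invoke the representations (\ref{beta}) and (\ref{normb}): both are extreme eigenvalues of $R^{-1} B P^{-1} B^*$, where $B P^{-1} B^* = (K+i\omega M) Y^{-1} (K - i\omega M) + \nu^{-1} M$ (using $P^{-1} = \mathrm{diag}(Y^{-1}, \nu^{-1} M^{-1})$). Since $R = \nu^{-1} Y$, we need the generalized eigenvalues of $(K+i\omega M) Y^{-1}(K-i\omega M) + \nu^{-1}M$ relative to $\nu^{-1} Y$, i.e. of $\nu (K+i\omega M) Y^{-1}(K-i\omega M) + M$ relative to $Y$. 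The upper bound $\|b\| \le 1$ amounts to showing this numerator is $\preceq Y = M + \sqrt\nu(K+\omega M)$; expanding $\sqrt\nu(K+\omega M)$ and comparing term by term with $\nu(K+i\omega M)Y^{-1}(K-i\omega M)$ should work because $(K+i\omega M)Y^{-1}(K-i\omega M) \preceq K + \omega M$ when $Y \succeq \sqrt\nu(K+\omega M)$ — this is again the deliberate balancing in the definition of $Y$. The lower bound $\beta \ge \sqrt2/2$, i.e. $\beta^2 \ge 1/2$, is the dual statement and should follow from the same spectral comparison together with the positivity of $M$, possibly again through an optimized Young's inequality giving the factor $1/2$. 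I would then conclude by collecting the five estimates; none of $\lambda_\text{min}^A \ge 0$, $\lambda_\text{max}^A \le 1$, $\|b\| \le 1$ require more than the monotonicity $Y \succeq M$ and the Cauchy–Schwarz-type bound $(K+i\omega M)Y^{-1}(K-i\omega M) \preceq K + \omega M$, while $\alpha$ and $\beta$ carry the real analytic content.
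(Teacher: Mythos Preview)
Your treatment of $\lambda_\text{min}^A \ge 0$ and $\lambda_\text{max}^A \le 1$ is correct and matches the paper. Your parametrization of $\ker B$ by the state equation $(K+i\omega M)y = Mu$ for the $\alpha$-bound is also exactly what the paper (via \cite{schoeberl:07b}) does, and your Schur-complement route to $\|b\|$ and $\beta$ is a legitimate alternative to the direct Cauchy--Schwarz arguments in \cite{schoeberl:07b}.

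However, there is a concrete gap in the $\alpha$ and $\beta$ estimates. After you substitute $u = M^{-1}(K+i\omega M)y$, the Rayleigh quotient becomes $(\|y\|_M^2 + \nu\|u\|_M^2)/(\|y\|_M^2 + \sqrt{\nu}\,\langle(K+\omega M)y,y\rangle + \nu\|u\|_M^2)$, and everything hinges on bounding the middle term $\sqrt{\nu}\,(\|y\|_K^2 + \omega\|y\|_M^2)$ by $\|y\|_M$ and $\sqrt{\nu}\,\|u\|_M$. Your suggested tool, an inequality of the form $\|(K+\omega M)y\|_{M^{-1}}^2 \le \langle(K+\omega M)y,y\rangle\cdot\langle\,\cdot\,\rangle$, involves only the \emph{real} operator $K+\omega M$ and cannot by itself deliver the required bound; the whole difficulty is precisely that the constraint involves $K+i\omega M$ while the norm involves $K+\omega M$. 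The paper isolates the one new ingredient needed for $\omega\neq 0$: for any $y$ and $u$ with $(K+i\omega M)y = Mu$,
\[
  \|y\|_K^2 + \omega\,\|y\|_M^2 \le \sqrt{2}\,\|y\|_M\,\|u\|_M,
\]
obtained from $(\|y\|_K^2+\omega\|y\|_M^2)^2 \le 2(\|y\|_K^4+\omega^2\|y\|_M^4) = 2\,|\langle(K+i\omega M)y,y\rangle|^2 = 2\,|\langle Mu,y\rangle|^2 \le 2\,\|y\|_M^2\|u\|_M^2$. The key identity $|\langle(K+i\omega M)y,y\rangle|^2 = \|y\|_K^4 + \omega^2\|y\|_M^4$ exploits the imaginary unit and is what ``tracking the complex cross-terms'' really means here. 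Once you have this inequality, setting $a=\|y\|_M$, $c=\sqrt{\nu}\,\|u\|_M$ yields a ratio $\ge (a^2+c^2)/(a^2+\sqrt 2\,ac+c^2)$, whose minimum $2/(2+\sqrt 2)=2-\sqrt 2$ is attained at $a=c$; this is where the constant comes from, not from a generic Young inequality. The $\beta\ge \sqrt 2/2$ bound uses the same identity (applied to $q$ in place of $y$), so your Schur-complement computation will also stall without it.
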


\begin{proof}
The estimate $\lambda_\text{min}^A \ge 0$ is trivial. The upper bounds for $\lambda_\text{max}^A = \|a\|$ and $\|b\|$ follow completely analoguously to the proof of Lemma 4.1 in \cite{schoeberl:07b}. 

The proof of the lower bounds for the inf-sup constants $\alpha$ and $\beta$ in \cite{schoeberl:07b} covers only the case $\omega = 0$. An essential step of that proof was the estimate 
\[
  \|y\|_K^2 \le \|y\|_M\, \|u\|_M
\]
for all $y$ and $u$ satisfying the state equation for $\omega = 0$, i.e., $K y = M u$. 
That part of the proof has to be replaced for general $\omega$ by the estimate 
\[
  \|y\|_K^2 + \omega \, \|y\|_M^2 
     \le \sqrt{2} \, \|y\|_M \|u\|_M
\]
for all $y$ and $u$ satisfying the state equation $(K + i \omega \, M) y = M u$, which easily follows:
\begin{align*}
  \left( \|y\|_K^2 + \omega \, \|y\|_M^2 \right)^2  
   & \le 2 \, \left( \|y\|_K^4 + \omega^2 \, \|y\|_M^4 \right) \\
   &  =  2 \, \left| \langle (K + i \omega \, M) y, y \rangle \right|^2 
      =  2 \, \left| \langle Mu , y \rangle \right|^2 
     \le 2 \, \|y\|_M^2 \|u\|_M^2 .
\end{align*}
All other arguments are completely identical to the corresponding arguments used in \cite{schoeberl:07b} and are omitted. 
\end{proof}

If $\alpha$, $\lambda_\text{min}^A$, $\lambda_\text{max}^A$, $\beta$, and $\|b\|$ in (\ref{mu124}) and (\ref{cubic_sym}) are replaced by the corresponding lower or upper bounds provided by Theorem \ref{CoeffEx1}, it immediately follows that the spectrum of $\mathcal{P}^{-1} \mathcal{M}$ is contained in the set
\[
  \left[-1, \frac{1}{2}(1 - \sqrt{3}) \right] 
  \cup
  \left[ \mu_3, \frac{1}{2}(1 + \sqrt{5}) \right],
\]
where $\mu_3$ is the smallest positive root of the cubic equation
\[
  \mu^3 - \mu^2 - \frac{1}{2} \, \mu + 1 - \frac{\sqrt{2}}{2} = 0 .
\]
These intervals read in 3-digit accuracy
\[
  [-1, -0.366] \cup [0.396, 1.618].
\]
From the second part of Theorem \ref{thmsym} we know a simple lower bound for $\mu_3$:
\[
   \mu_3 \ge \frac{4 - 2 \sqrt{2}}{1 + \sqrt{17 - 8 \sqrt{2}}} \approx 0.346.
\]

There is a frequently used alternative approach for solving (\ref{KKT}). 
From the second row of (\ref{KKT}) it follows that $\underline{u} = \underline{p}/\nu$. Therefore, we can eliminate the control $\underline{u}$ and obtain the reduced optimality system
\[
  \begin{bmatrix}
  M  & K - i \omega \, M \\
  K + i \omega \, M & - \frac{1}{\nu} M
  \end{bmatrix}
  \begin{bmatrix} \underline{y} \\ \underline{p} \end{bmatrix}
   =
  \begin{bmatrix} M \underline{y}_d \\ 0 \end{bmatrix} .
\]
This system was discussed in \cite{zulehner:11} for the special case $\omega = 0$.
The preconditioner constructed in \cite{zulehner:11} leads to convergence rates of the preconditioned MINRES method which do not deteriorate if the mesh size $h$ and/or the cost parameter $\nu$ approach 0. The results from \cite{zulehner:11} were extended to the case $\omega \neq 0$ in \cite{kolmbauer:11} (based on results from \cite{copeland:11}), where a preconditioner was constructed and a bound for the number of MINRES-iterations was derived which is independent of $h$, $\nu$, and $\omega$.

Here we will shed some new light on the preconditioner from \cite{kolmbauer:11} 
by presenting a slightly different analysis of the already known properties within the framework of complex matrices, 
and by supplementing these properties by a new statement on the symmetry of the spectrum of the preconditioned matrix.
This new approach is helpful for extending the analysis of preconditioners for other optimal control problems, like the one discussed in the subsequent subsection.
 
A simple scaling leads to the equivalent system
\[
  \begin{bmatrix}
  M & \sqrt{\nu} \, (K - i \omega \, M) \\[1ex]
  \sqrt{\nu} \, (K + i \omega \, M) & - M 
  \end{bmatrix}
  \begin{bmatrix} \underline{y} \\[1ex] \frac{1}{\sqrt{\nu}} \, \underline{p}
  \end{bmatrix}
   =
  \begin{bmatrix} M \underline{y}_d \\[1ex] 0 \end{bmatrix} ,
\]
which is of the form (\ref{M}) with
\begin{equation} \label{ABparabol}
   A = M  \in \mathbb{R}^{n \times n}, \quad
   B = \sqrt{\nu} \, (K + i \omega \, M) \in \mathbb{C}^{n \times n},
   \  \text{and} \quad
   C = -A .
\end{equation}
Using Sect.~\ref{general} and \ref{symspectrum} we obtain the following results on the eigenvalues of $\mathcal{P}^{-1} \mathcal{M}$:

\begin{theorem}
Let $\mathcal{M}$ be given by (\ref{M}) and (\ref{ABparabol}). 
Then, for $\mathcal{P}$, given by
\[
  \mathcal{P} = \begin{bmatrix} P & 0 \\ 0 & P \end{bmatrix} ,
\]
with $P$ real and symmetric positive definite,
the spectrum of $\mathcal{P}^{-1} \mathcal{M}$ is real and symmetric around zero. 
Moreover,
for $P = M + \sqrt{\nu} \, (K + \omega \, M)$, 
the following estimates hold 
\[
  \gamma \ge \frac{1}{\sqrt{3}}
  \quad \text{and} \quad 
  \|\mathcal{B}\| \le 1 ,
\]
where $\gamma$ is
the inf-sup constant, and 
$\mathcal{B}$ is the bilinear form associated with $\mathcal{M}$.
\end{theorem}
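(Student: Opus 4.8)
The claim has two separate parts: first, that for \emph{every} real symmetric positive definite $P$ the spectrum of $\mathcal{P}^{-1}\mathcal{M}$ is real and symmetric about the origin; and second, that for the particular choice $P = M + \sqrt{\nu}(K+\omega M)$ the extremal eigenvalues are pinched between $1/\sqrt3$ and $1$. I would prove them in that order, exploiting that $\mathcal{M}^* = \mathcal{M}$ here (since $A=M$ is real symmetric and $B^*=\bar B$), so that $\mathcal{M}$ sits in the Hermitian framework of Section~\ref{Hermitian} and $\gamma = |\mu_{\min}|$, $\|\mathcal{B}\| = |\mu_{\max}|$ for the eigenvalues $\mu$ of $\mathcal{P}^{-1}\mathcal{M}$.

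For the structural statement, note that $\mathcal{P} = \mathrm{diag}(P,P)$ is positive definite and that $\mathcal{P}^{-1}\mathcal{M}$ has the same eigenvalues as the Hermitian matrix $\mathcal{P}^{-1/2}\mathcal{M}\mathcal{P}^{-1/2} = \left[\begin{smallmatrix}\tilde A & \tilde B^{*}\\ \tilde B & -\tilde A\end{smallmatrix}\right]$ with $\tilde A = P^{-1/2}MP^{-1/2}$ and $\tilde B = \sqrt{\nu}\,P^{-1/2}(K+i\omega M)P^{-1/2}$. Here $\tilde A$ is real symmetric positive definite, and $\tilde B$ is complex symmetric because $M$, $K$, $P^{-1/2}$ are symmetric; moreover $\tilde B$ is nonsingular, since $(K+i\omega M)v = 0$ for $v\neq 0$ would force $0 = v^{*}(K+i\omega M)v = v^{*}Kv + i\omega\,v^{*}Mv$, whose real part $v^{*}Kv$ is strictly positive. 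Thus $\mathcal{P}^{-1/2}\mathcal{M}\mathcal{P}^{-1/2}$ is exactly of the type treated in Section~\ref{symspectrum}, and Theorem~\ref{SymEigen} applies: its eigenvalues are real and occur in pairs $(\mu,-\mu)$; with the multiplicities matched (the pairing in Theorem~\ref{SymEigen} preserves Jordan blocks, and the matrix is Hermitian anyway), the spectrum of $\mathcal{P}^{-1}\mathcal{M}$ is symmetric about $0$.

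For the explicit bounds I take $P = M + \sqrt{\nu}(K+\omega M)$ with $\omega\ge 0$, which makes $P$ positive definite. The key is that the symmetric pencil $(K,M)$ is simultaneously diagonalizable: pick a real invertible $V$ with $V^{T}MV = I$ and $V^{T}KV = \Lambda = \mathrm{diag}(\lambda_1,\dots,\lambda_n)$, $\lambda_j>0$. The congruence $\mathrm{diag}(V,V)$ leaves the generalized eigenvalues of $(\mathcal{M},\mathcal{P})$ unchanged and turns $\mathcal{M}$ into $\left[\begin{smallmatrix}I & D^{*}\\ D & -I\end{smallmatrix}\right]$ and $\mathcal{P}$ into $\mathrm{diag}(\Pi,\Pi)$, with $D = \sqrt{\nu}(\Lambda+i\omega I)$ and $\Pi = I + \sqrt{\nu}(\Lambda+\omega I)$ diagonal. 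Pairing the $j$-th diagonal entries of the two $n$-blocks decouples the problem into $n$ scalar problems $\left[\begin{smallmatrix}1 & \bar d_j\\ d_j & -1\end{smallmatrix}\right]z = \mu\,\pi_j z$, where $d_j = \sqrt{\nu}(\lambda_j+i\omega)$ and $\pi_j = 1 + \sqrt{\nu}(\lambda_j+\omega)$, each having eigenvalues $\mu = \pm\sqrt{1+|d_j|^{2}}/\pi_j$. So it remains to bound $\sqrt{1+|d_j|^{2}}/\pi_j$ between $1/\sqrt3$ and $1$ uniformly in $j$.

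Writing $s = \sqrt{\nu}\,\lambda_j>0$ and $t = \sqrt{\nu}\,\omega\ge 0$, one has $|d_j|^{2} = s^{2}+t^{2}$ and $\pi_j = 1+s+t$, and the two required inequalities reduce to $\pi_j^{2} - (1+|d_j|^{2}) = 2s+2t+2st \ge 0$ and $3(1+|d_j|^{2}) - \pi_j^{2} = (s-t)^{2} + (s-1)^{2} + (t-1)^{2} \ge 0$; both are immediate, the second being a sum of squares (equality at $s=t=1$). Hence $\|\mathcal{B}\|\le 1$ and $\gamma\ge 1/\sqrt3$. There is no serious technical obstacle here; the only points demanding genuine insight are the reduction of the first part to the normal form of Section~\ref{symspectrum} together with the nonsingularity check that makes Theorem~\ref{SymEigen} applicable, and, for the second part, recognizing the simultaneous diagonalization of $(K,M)$ that decouples the spectrum and spotting the sum-of-squares identity behind the constant $1/\sqrt3$.
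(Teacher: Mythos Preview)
Your argument is correct. The first part (symmetry of the spectrum) matches the paper's approach: reduce to $\mathcal{P}^{-1/2}\mathcal{M}\mathcal{P}^{-1/2}$ and invoke Theorem~\ref{SymEigen}. You are even a bit more careful than the paper, which simply cites Theorem~\ref{SymEigen} without explicitly checking the nonsingularity hypothesis on the off-diagonal block.

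For the bounds $\gamma\ge 1/\sqrt 3$ and $\|\mathcal{B}\|\le 1$, however, your route is genuinely different from the paper's. The paper does \emph{not} diagonalize; instead it introduces the auxiliary operator
\[
  \mathcal{H}=\begin{bmatrix} I & (1-i)I\\ (1+i)I & -I\end{bmatrix}
\]
and verifies by direct computation that $\Re\langle \mathcal{M}x,\mathcal{H}x\rangle=\|x\|_\mathcal{P}^2$ and $\|\mathcal{H}x\|_\mathcal{P}^2=3\|x\|_\mathcal{P}^2$, from which $\gamma\ge 1/\sqrt 3$ follows immediately via the inf--sup characterization; the upper bound comes from showing $|\langle \mathcal{M}x,x\rangle|\le\langle\mathcal{P}x,x\rangle$ by Cauchy--Schwarz and the arithmetic--geometric mean inequality. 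Your simultaneous diagonalization of the pencil $(K,M)$ reduces everything to explicit $2\times 2$ blocks and the neat sum-of-squares identity $3(1+s^2+t^2)-(1+s+t)^2=(s-t)^2+(s-1)^2+(t-1)^2$. This is more elementary, yields the eigenvalues exactly, and exhibits sharpness of the lower bound at $s=t=1$. The trade-off is that your argument is tied to the finite-dimensional commuting structure (everything is a polynomial in $M^{-1}K$), whereas the paper's $\mathcal{H}$-trick is formulated so that it lifts to the Hilbert-space setting and, crucially, is reused verbatim for the Stokes control problem in Theorem~\ref{CoeffEx2}, where a global diagonalization is not available.
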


\begin{proof}
The symmetry of the spectrum of $\mathcal{P}^{-1} \mathcal{M}$ around zero directly follows from Theorem~\ref{SymEigen} applied to the similar matrix $\mathcal{P}^{-\frac{1}{2}} \mathcal{M}\mathcal{P}^{-\frac{1}{2}}$.
Let
\[
  \mathcal{H} = \begin{bmatrix} I & (1 - i) \, I \\ (1+i) \, I & - I \end{bmatrix} .
\]
Then, by direct calculations, one shows that
\[
  \Re \langle \mathcal{M} x, \mathcal{H} x \rangle = \|x\|_\mathcal{P}^2 
  \quad \text{and} \quad \|\mathcal{H} x \|_\mathcal{P}^2 = 3  \, \|x\|_\mathcal{P}^2
  \quad \text{for all} \ x \in \mathbb{C}^{2n} .
\]
Therefore,
\begin{align*}
  \gamma
   & = \inf_{\rule[0.6ex]{0ex}{1ex} 0 \neq x \in X} \sup_{0 \neq w \in X} 
           \frac{|\langle \mathcal{M} x, w \rangle |}{\|x\|_\mathcal{P} \|w\|_\mathcal{P}} 
   \ge \inf_{\rule[0.6ex]{0ex}{1ex} 0 \neq x \in X}  
           \frac{|\langle \mathcal{M} x, \mathcal{H} x \rangle|}{\|x\|_\mathcal{P} \|\mathcal{H} x\|_\mathcal{P}} \\ 
   & \ge \inf_{\rule[0.6ex]{0ex}{1ex} 0 \neq x \in X}  
           \frac{\Re \langle \mathcal{M} x, \mathcal{H} x \rangle}{\|x\|_\mathcal{P} \|\mathcal{H} x\|_\mathcal{P}} 
     =  \inf_{\rule[0.6ex]{0ex}{1ex} 0 \neq x \in X}  
           \frac{\|x\|_\mathcal{P}^2}{\|x\|_\mathcal{P} \sqrt{3} \|x\|_\mathcal{P}} 
    = \frac{1}{\sqrt{3}} .
\end{align*}
Moreover, since
\begin{align*}
  \langle \mathcal{M} x,x \rangle
   & = \langle M y,y \rangle + 2 \sqrt{\nu} \big( \Re \langle K y, p \rangle - \omega  \Im \langle M y, p \rangle \big)
   - \langle M p, p \rangle 
   \  \text{for} \ x = \begin{bmatrix} y \\ p \end{bmatrix} ,
\end{align*}
and
\[
  |\Im \langle M y, p \rangle| \le |\langle M y, p \rangle | \le \|y\|_M \|p\|_M, \ 
  |\Re \langle K y, p \rangle| \le |\langle K y, p \rangle | \le \|y\|_K \|p\|_K,
\]
it follows that
\begin{align*}
  |\langle \mathcal{M} x,x \rangle| 
  & \le \|y\|_M^2  + 2 \sqrt{\nu} \left( \|y\|_K \|p\|_K + \omega \, \|y\|_M \|p\|_M \right) + \|p\|_M^2 \\
  & \le \|y\|_M^2  + \sqrt{\nu} \left( \|y\|_K^2 + \|p\|_K^2 + \omega \, \left(\|y\|_M^2 + \|p\|_M^2 \right) \right) + \|p\|_M^2 \\
  & = \langle \mathcal{P} x, x \rangle \quad \text{for all} \ x \in \mathbb{C}^{2n},
\end{align*}
which implies that 
\[
  \|\mathcal{B}\|
     = \sup_{0 \neq x \in X} \sup_{0 \neq w \in X} 
           \frac{|\langle \mathcal{M} x, w \rangle |}{\|x\|_\mathcal{P} \|w\|_\mathcal{P}} 
     =  \sup_{0 \neq x \in X}  
           \frac{|\langle \mathcal{M} x, x \rangle|}{\langle \mathcal{P} x, x \rangle} \\ 
   \le 1. 
\]
This completes the proof. 
\end{proof}

\vskip 0.1in
If $\gamma$ is replaced by the lower bound $1/\sqrt{3}$ and $\|\mathcal{B}\|$ by the upper bound 1, it immediately follows from (\ref{symbound}) that the spectrum of $\mathcal{P}^{-1} \mathcal{M}$ is contained in the set
\[
  \left[-1, - \frac{1}{\sqrt{3}} \right] \cup
  \left[ \frac{1}{\sqrt{3}}, 1 \right]
   \approx [-1, -0.577] \cup [0.557,1].
\]

\begin{remark}
{\rm
The essential step for estimating the inf-sup constant $\gamma$ from below was the introduction of the matrix $\mathcal{H}$. Translated into a general Hilbert space setting and slightly more general,  the essential requirements on such a linear operator $\mathcal{H} \colon X \longrightarrow X$ are that there are some positive constants $c_1$, $c_2$ such that
\[
  |\mathcal{B}(z,\mathcal{H} z)| \ge c_1 \, \|z\|_X^2 
  \quad \text{and} \quad
  \|\mathcal{H} z\| \le c_2 \, \|z\|_X
  \quad \text{for all} \quad z \in X .
\]
Then it follows analogously to the previous proof that $\gamma \ge c_1 / c_2$.
}
\end{remark}

\subsection{Distributed optimal control for the time-periodic Stokes equations}
\label{tp:Stokes}

Next we consider the following problem: Find the velocity $\mathbf{u}(x,t)$, the pressure $p(x,t)$, and the force $\mathbf{f}(x,t)$ that minimizes the cost functional
\[
  J(\mathbf{u},\mathbf{f}) = \frac{1}{2} \int_0^T \int_\Omega |\mathbf{u}(x,t)-\mathbf{u}_d(x,t)|^2 \ d x \ d t + 
      \frac{\nu}{2} \int_0^T \int_\Omega |\mathbf{f}(x,t)|^2 \ d x \ d t 
\]
subject to the time-periodic Stokes problem
\begin{alignat*}{2}
  \frac{\partial}{\partial t} \mathbf{u}(x,t) - \Delta \mathbf{u}(x,t) + \nabla p(x,t) & \ = \ \mathbf{f}(x,t) & &\quad \text{in} \ Q_T, \\
  \nabla \cdot \mathbf{u}(x,t) & \ = \ 0 & &\quad \text{in} \ Q_T, \\
  \mathbf{u}(x,t) & \ = \ 0  & &\quad \text{on} \ \Sigma_T, \\
  \mathbf{u}(x,0) & \ = \ \mathbf{u}(x,T) & & \quad \text{on} \ \Omega, \\
  p(x,0) & \ = \ p(x,T) & & \quad \text{on} \ \Omega, \\
  \mathbf{f}(x,0) & \ = \ \mathbf{f}(x,T) & & \quad \text{on} \ \Omega.
\end{alignat*}
Here $\mathbf{u}_d(x,t)$ is a given target velocity, $\nu > 0$ is a cost or regularization parameter, and $|.|$ denotes the Euclidean norm in $\mathbb{R}^d$.  
We assume that $u_d(x,t)$ is time-harmonic, i.e.:
\[
  \mathbf{u}_d(x,t) = \mathbf{u}_d(x) \, e^{i \omega t}
  \quad \text{with} \quad \omega = \frac{2\pi k}{T} \quad \text{for some} \ k \in \mathbb{Z}.
\]
Then there is a time-periodic solution to the original control problem of the form
\[
  \mathbf{u}(x,t) = \mathbf{u}(x) \, e^{i \omega t}, \quad
  p(x,t) = p(x) \, e^{i \omega t}, \quad
  \mathbf{f}(x,t) = \mathbf{f}(x) \, e^{i \omega t},
\]
where $\mathbf{u}(x)$, $p(x)$, and $\mathbf{f}(x)$ solve the following time-independent optimal control problem: 
Minimize
\[
  \frac{1}{2} \int_\Omega |\mathbf{u}(x) - \mathbf{u}_d(x)|^2 \ d x + 
      \frac{\nu}{2} \int_\Omega |\mathbf{f}(x)|^2 \ d x 
\]
subject to
\begin{alignat*}{2}
   i \omega \, \mathbf{M} \mathbf{u}(x) - \Delta \mathbf{u}(x) + \nabla p(x) & \ = \ \mathbf{f}(x) & &\quad \text{in} \ \Omega, \\
  \nabla \cdot \mathbf{u}(x) & \ = \ 0 & &\quad \text{in} \ \Omega, \\
  \mathbf{u}(x) & \ = \ 0  & &\quad \text{on} \ \Gamma.
\end{alignat*}
Using appropriate finite element spaces $\mathbf{V}_h$ of dimension $n$ and $Q_h$ of dimension $m$ for $\mathbf{u}$ and $p$, respectively, and the same finite element space $\mathbf{V}_h$ for $\mathbf{f}$ as well, we obtain the following discrete version: 
Minimize
\[
  \frac{1}{2} (\underline{\mathbf{u}} - \underline{\mathbf{u}}_d)^* \mathbf{M} (\underline{\mathbf{u}} - \underline{\mathbf{u}}_d) + 
      \frac{\nu}{2} \, \underline{\mathbf{f}}^* \mathbf{M}  \underline{\mathbf{f}}
\]
subject to
\begin{alignat*}{2}
  i \omega \, \mathbf{M}\, \underline{\mathbf{u}} + \mathbf{K} \underline{\mathbf{u}} - \mathbf{D}^T \underline{p} & \ = \mathbf{M} \underline{\mathbf{f}} , \\
  \mathbf{D} \underline{\mathbf{u}} & \ = \ 0 .
\end{alignat*}
Here the real matrices $\mathbf{M}$, $\mathbf{K}$, and $\mathbf{D}$ are the mass matrix, representing the $L^2$-inner product in $\mathbf{V}_h$, the discretized negative vector Laplacian, and the discretized divergence, respectively. 
As before, underlined quantities denote the coefficient vectors of finite element functions relative to a basis. 

Completely similar to the discussion in the previous subsection we obtain the following reduced optimality system, again after eliminating the control, here $\underline{\mathbf{f}}$.
\[
  \begin{bmatrix}
  \mathbf{M} & 0 & \mathbf{K} - i \omega \, \mathbf{M} & - \mathbf{D}^T \\
   0 & 0 & - \mathbf{D} & 0 \\
   \mathbf{K} + i \omega \, \mathbf{M} & - \mathbf{D}^T & - \frac{1}{\nu} \mathbf{M} & 0 \\
   - \mathbf{D} & 0 & 0 & 0
  \end{bmatrix}
  \begin{bmatrix} \underline{\mathbf{u}} \\ \underline{p} \\ \underline{\mathbf{w}} \\ \underline{r} \end{bmatrix}
   =
  \begin{bmatrix} \mathbf{M} \underline{\mathbf{u}}_d \\ 0 \\ 0 \\ 0 \end{bmatrix}
\]
This system was discussed in \cite{zulehner:11} for the special case $\omega = 0$, which corresponds to the steady-state version of the control problem.
The preconditioner constructed in \cite{zulehner:11} leads to convergence rates of the preconditioned MINRES method 
that do not deteriorate if the mesh size $h$ and/or the cost parameter $\nu$ approach 0.

We will now apply the theoretical findings of the preceding sections and construct a preconditioner that will also work for general $\omega$. Convergence results will be derived that guarantee a bound for the number of iterations which is independent of $h$, $\nu$, and $\omega$. If applied to the special case $\omega = 0$, this bound is more accurate than the bound derived in \cite{zulehner:11}.

A first and essential observation is that, by swapping the second and the third rows and columns, we obtain a saddle point matrix with a vanishing 2-by-2 block in the right lower part. This leads with a simple scaling to the system
\[
  \begin{bmatrix}
  \mathbf{M} & \sqrt{\nu} \, (\mathbf{K} - i \omega \, \mathbf{M}) & 0 & - \sqrt{\nu} \, \mathbf{D}^T \\[1ex]
  \sqrt{\nu} \, (\mathbf{K} + i \omega \, \mathbf{M}) & - \mathbf{M} & - \sqrt{\nu} \, \mathbf{D}^T & 0 \\[1ex]
  0 & - \sqrt{\nu} \, \mathbf{D} & 0 & 0 \\[1ex]
  - \sqrt{\nu} \, \mathbf{D} & 0 & 0 & 0
  \end{bmatrix}
  \begin{bmatrix} \underline{\mathbf{u}} \\[1ex] \frac{1}{\sqrt{\nu}} \, \underline{\mathbf{w}} \\[1ex] \underline{p}  \\[1ex] \frac{1}{\sqrt{\nu}} \, \underline{r} \end{bmatrix}
   =
  \begin{bmatrix} \mathbf{M} \underline{\mathbf{u}}_d \\[1ex] 0 \\[1ex] 0 \\[1ex] 0 \end{bmatrix} ,
\]
which is of the form (\ref{M}) with 
\begin{equation} \label{AStokes}
   A = \begin{bmatrix} \mathbf{M} & \sqrt{\nu} \,  (\mathbf{K} - i \omega \, \mathbf{M}) \\ \sqrt{\nu} \, (\mathbf{K} + i \omega \, \mathbf{M}) & - \mathbf{M} \end{bmatrix} \in \mathbb{C}^{2n \times 2n},
\end{equation}
\begin{equation} \label{BStokes}
   B = - \sqrt{\nu} \, \begin{bmatrix} 0 & \mathbf{D} \\ \mathbf{D} & 0 \end{bmatrix} \in \mathbb{C}^{2m \times 2n} ,
  \quad \text{and} \quad
   C = 0.
\end{equation}
Using the results of Sect.~\ref{Hermitian} and \ref{symspectrum} we obtain the following properties of the bilinear forms $a$ and $b$ associated with $A$ and $B$, respectively, in terms of the quantities $\alpha$, $\|a\|$, $\beta$, and $\|b\|$:
\begin{theorem} \label{CoeffEx2}
Let $\mathcal{M}$ be given by (\ref{M}), (\ref{AStokes}), and (\ref{BStokes}). 
Then, for $\mathcal{P}$, given by
\[
  \mathcal{P} = \begin{bmatrix} P & 0 \\ 0 & R \end{bmatrix}
  \quad \text{with} \quad 
  P = \begin{bmatrix} \mathbf{P} & 0 \\
   0 & \mathbf{P}
  \end{bmatrix} 
  \quad  \text{and} \quad 
  R =  \nu \begin{bmatrix} S & 0 \\
   0 & S 
  \end{bmatrix},
\]
with $\mathbf{P}$ and $S$ real and symmetric positive definite,
the spectrum of $\mathcal{P}^{-1} \mathcal{M}$ is real and symmetric around zero. 
Moreover, for
\begin{equation} \label{PS}
  \mathbf{P} =  \mathbf{M} + \sqrt{\nu} \, (\mathbf{K} + \omega \, \mathbf{M})
  \quad \text{and} \quad
  S = \mathbf{D} \left[ \mathbf{M} + \sqrt{\nu} \, (\mathbf{K} + \omega \, \mathbf{M}) \right]^{-1} \mathbf{D}^T,
\end{equation}
the following estimates hold
\[
  \alpha \ge \frac{1}{\sqrt{3}}, \quad 
  \|a\| \le 1, \quad
  \beta = 1, \quad \|b\| = 1.
\]
\end{theorem}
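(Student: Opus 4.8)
The plan is to prove the four assertions one at a time, leaning on the results already established for the time-periodic parabolic problem in Subsection~\ref{tp:parabolic}: the $(1,1)$-block $A$ in (\ref{AStokes}) together with $P=\begin{bmatrix}\mathbf{P}&0\\0&\mathbf{P}\end{bmatrix}$ is exactly an instance of the matrix $\mathcal{M}$ and preconditioner $\mathcal{P}$ treated there, with $M$, $K$ replaced by $\mathbf{M}$, $\mathbf{K}$. Recall in particular that the proof there produces the matrix $\mathcal{H}=\begin{bmatrix}I&(1-i)\,I\\(1+i)\,I&-I\end{bmatrix}$ satisfying $\Re\langle Ax,\mathcal{H}x\rangle=\langle Px,x\rangle$, $\|\mathcal{H}x\|_P^2=3\,\|x\|_P^2$ and $|\langle Ax,x\rangle|\le\langle Px,x\rangle$ for all $x$, as soon as $\mathbf{P}=\mathbf{M}+\sqrt{\nu}\,(\mathbf{K}+\omega\mathbf{M})$.

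\emph{Symmetry of the spectrum.} Interchanging the second and third block rows and columns turns $\mathcal{M}$ into the form $\begin{bmatrix}\hat{A}&\hat{B}^*\\\hat{B}&-\hat{A}\end{bmatrix}$ with $\hat{A}=\begin{bmatrix}\mathbf{M}&0\\0&0\end{bmatrix}$ real symmetric and $\hat{B}=\sqrt{\nu}\begin{bmatrix}\mathbf{K}+i\omega\mathbf{M}&-\mathbf{D}^T\\-\mathbf{D}&0\end{bmatrix}$ complex symmetric and nonsingular (since $\mathbf{K}$ is positive definite and $\mathbf{D}$ has full row rank), while $\mathcal{P}$ becomes $\begin{bmatrix}\hat{P}&0\\0&\hat{P}\end{bmatrix}$ with $\hat{P}=\begin{bmatrix}\mathbf{P}&0\\0&\nu S\end{bmatrix}$ real symmetric positive definite. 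Therefore $\mathcal{P}^{-1/2}\mathcal{M}\mathcal{P}^{-1/2}$ is, after the same permutation, a matrix of the shape treated in Theorem~\ref{SymEigen}, with real symmetric $\mathbb{A}=\hat{P}^{-1/2}\hat{A}\hat{P}^{-1/2}$ and complex symmetric nonsingular $\mathbb{B}=\hat{P}^{-1/2}\hat{B}\hat{P}^{-1/2}$. The argument in the proof of Theorem~\ref{SymEigen} needs only the nonsingularity of $\mathbb{B}$ together with the fact that $\mathcal{M}$ is Hermitian and nonsingular (which forces the matrix $H$ of Lemma~\ref{lemma:skewsym} to be nonsingular), so it goes through even though $\mathbb{A}$ is here only positive semidefinite; alternatively one may replace $\hat{A}$ by $\hat{A}+\varepsilon I$, apply Theorem~\ref{SymEigen}, and let $\varepsilon\to 0$. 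Either way, the eigenvalues of $\mathcal{P}^{-1}\mathcal{M}$ are real and come in pairs $(\mu,-\mu)$.

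\emph{The estimates $\|a\|\le 1$ and $\alpha\ge 1/\sqrt{3}$.} From $|\langle Ax,x\rangle|\le\langle Px,x\rangle$ and the self-adjointness of $A$ one gets $\|a\|=\sup_{x\neq 0}|\langle Ax,x\rangle|/\langle Px,x\rangle\le 1$. For $\alpha$, observe that $B$ acts through $\mathbf{D}$ on each of the two velocity blocks, so $\ker B=N\times N$ with $N=\ker\mathbf{D}$, and, since $N$ is a linear subspace, $\mathcal{H}$ maps $N\times N$ into itself. Using $v_0=\mathcal{H}u_0$ as a test element in the inf-sup quotient of $a$ over $\ker B$ gives
\[
  \alpha\;\ge\;\inf_{0\neq u_0\in\ker B}\frac{\Re\langle Au_0,\mathcal{H}u_0\rangle}{\|u_0\|_P\,\|\mathcal{H}u_0\|_P}\;=\;\inf_{0\neq u_0\in\ker B}\frac{\|u_0\|_P^2}{\sqrt{3}\,\|u_0\|_P^2}\;=\;\frac{1}{\sqrt{3}}.
\]

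\emph{The identities $\beta=\|b\|=1$.} A direct block multiplication with $B$ from (\ref{BStokes}) and $P=\begin{bmatrix}\mathbf{P}&0\\0&\mathbf{P}\end{bmatrix}$ yields $BP^{-1}B^*=\nu\begin{bmatrix}\mathbf{D}\mathbf{P}^{-1}\mathbf{D}^T&0\\0&\mathbf{D}\mathbf{P}^{-1}\mathbf{D}^T\end{bmatrix}=\nu\begin{bmatrix}S&0\\0&S\end{bmatrix}=R$ by the choice of $S$ in (\ref{PS}). Hence the Rayleigh quotients in (\ref{beta}) and (\ref{normb}) are identically $1$, so $\beta^2=\|b\|^2=1$. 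The one genuinely delicate point is the symmetry statement: after the block permutation the $(1,1)$-block is only positive semidefinite, so one must either use that the proof of Theorem~\ref{SymEigen} relies on nothing beyond the nonsingularity of $\mathbb{B}$ and of $\mathcal{M}$, or pass to the limit in the perturbed problem; the remaining three claims reduce to a short block computation together with a direct appeal to the parabolic-control estimates.
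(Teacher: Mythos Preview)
Your proof is correct and follows essentially the same route as the paper's. The paper likewise applies Theorem~\ref{SymEigen} to the matrix in the original (pre-swap) ordering, introduces the same matrix $H=\begin{bmatrix}\mathbf{I}&(1-i)\mathbf{I}\\(1+i)\mathbf{I}&-\mathbf{I}\end{bmatrix}$ with the key invariance property $H(\ker B)\subset\ker B$ for the bound on $\alpha$, carries out the same direct estimate for $\|a\|$, and reads off $\beta=\|b\|=1$ from $BP^{-1}B^*=R$. Your treatment of the symmetry step is in fact more careful than the paper's: you explicitly note that after the permutation the upper-left block $\hat A=\mathrm{diag}(\mathbf{M},0)$ is only positive semidefinite and explain why the argument of Theorem~\ref{SymEigen} still goes through (nonsingularity of $\hat B$ and of $\mathcal{M}$ suffice, or one passes to the limit $\varepsilon\to 0$), whereas the paper simply invokes Theorem~\ref{SymEigen} without comment.
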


\begin{proof}
The symmetry of the spectrum around zero follows from Theorem~\ref{SymEigen} applied to the system in the original ordering of the rows and columns.
Let
\[
  H = \begin{bmatrix} \mathbf{I} & (1-i) \, \mathbf{I} \\ (1+i) \, \mathbf{I} & - \mathbf{I}  \end{bmatrix} .
\]
Then, by direct calculations, one shows that
\[
  \Re \langle A u, H u \rangle = \|u\|_P^2 
  \quad \text{and} \quad \|H u \|_P^2 = 3  \, \|u\|_P^2
  \quad \text{for all} \ u \in \mathbb{C}^{2n} .
\]
Furthermore, observe that 
\[
  H u \in \ker B \quad \text{for all} \quad u \in \ker B.
\]
Therefore,
\begin{align*}
  \alpha 
   & = \inf_{\rule[0.6ex]{0ex}{1ex} 0 \neq u \in \ker B} \sup_{0 \neq v \in \ker B} 
           \frac{|\langle A u, v \rangle |}{\|u\|_P \|v\|_P} 
   \ge \inf_{\rule[0.6ex]{0ex}{1ex} 0 \neq u \in \ker B}  
           \frac{|\langle A u, H u \rangle|}{\|u\|_P \|Hu\|_P} \\ 
   & \ge \inf_{\rule[0.6ex]{0ex}{1ex} 0 \neq u \in \ker B}  
           \frac{\Re \langle A u, H u \rangle}{\|u\|_P \|Hu\|_P} 
     =  \inf_{\rule[0.6ex]{0ex}{1ex} 0 \neq u \in \ker B}  
           \frac{\|u\|_P^2}{\|u\|_P \sqrt{3} \|u\|_P} 
    = \frac{1}{\sqrt{3}} .
\end{align*}
Moreover, since
\begin{align*}
  & \langle A u,u \rangle = \langle \mathbf{M} \underline{\mathbf{u}},\underline{\mathbf{u}} \rangle \\
  & \quad + 2 \sqrt{\nu} \big( \Re \langle \mathbf{K} \underline{\mathbf{u}}, \underline{\mathbf{w}} \rangle - \omega \, \Im \langle \mathbf{M} \underline{\mathbf{u}},\underline{\mathbf{w}} \rangle \big)
   - \langle \mathbf{M} \underline{\mathbf{w}}, \underline{\mathbf{w}} \rangle
  \ \text{with} \ 
  u = \begin{bmatrix} \underline{\mathbf{u}} \\ \underline{\mathbf{w}} \end{bmatrix}
\end{align*}
and
\begin{align*}
  |\Im \langle \mathbf{M} \underline{\mathbf{u}}, \underline{\mathbf{w}} \rangle| 
    & \le |\langle \mathbf{M} \underline{\mathbf{u}}, \underline{\mathbf{w}} \rangle | \le \|\underline{\mathbf{u}}\|_\mathbf{M} \|\underline{\mathbf{w}}\|_\mathbf{M}, \\
  |\Re \langle \mathbf{K} \underline{\mathbf{u}}, \underline{\mathbf{w}} \rangle| 
    & \le |\langle \mathbf{K} \underline{\mathbf{u}}, \underline{\mathbf{w}} \rangle | \le \|\underline{\mathbf{u}}\|_\mathbf{K} \|\underline{\mathbf{w}}\|_\mathbf{K},
\end{align*}
it follows that
\begin{align*}
  |\langle A u,u \rangle| 
  & \le \|\underline{\mathbf{u}}\|_\mathbf{M}^2  + 2 \sqrt{\nu} \left( \|\underline{\mathbf{u}}\|_\mathbf{K} \|\underline{\mathbf{w}}\|_\mathbf{K} + \omega \|\underline{\mathbf{u}}\|_\mathbf{M} \|\underline{\mathbf{w}}\|_\mathbf{M}\right) + \|\underline{\mathbf{w}}\|_\mathbf{M}^2 \\
  & \le \|\underline{\mathbf{u}}\|_\mathbf{M}^2  + \sqrt{\nu} \left( \|\underline{\mathbf{u}}\|_\mathbf{K}^2 + \|\underline{\mathbf{w}}\|_\mathbf{K}^2 + \omega \left(\|\underline{\mathbf{u}}\|_\mathbf{M}^2 + \|\underline{\mathbf{w}}\|_\mathbf{M}^2 \right) \right) + \|\underline{\mathbf{w}}\|_\mathbf{M}^2 \\
  & = \langle P u, u \rangle \quad \text{for all} \ u \in \mathbb{C}^{2n},
\end{align*}
which implies that $\|a\| \le 1$.

Finally, since $\langle B P^{-1} B^T q,q \rangle = \langle R q,q \rangle$ for all $q \in \mathbb{C}^{2m}$,
it directly follows from (\ref{beta}) and (\ref{normb}) that $\beta = \|b\| = 1$. 
\end{proof}

If $\alpha$, $\|a\|$, $\beta$, and $\|b\|$ in (\ref{cubic}) and (\ref{upperbound}) are replaced by the corresponding lower or upper bounds provided by Theorem \ref{CoeffEx2}, it immediately follows from (\ref{symbound}) that the spectrum of $\mathcal{P}^{-1} \mathcal{M}$ is contained in the set
\[
  \left[-\frac{1}{2} 
      (1 + \sqrt{5}),  -\mu_3 \right] \cup
  \left[ \mu_3, \frac{1}{2} 
      (1 + \sqrt{5}) \right],
\]
where $\mu_3$ is the smallest positive root of the cubic equation
\[
  \mu^3 - 2 \mu + \frac{1}{\sqrt{3}} = 0 .
\]
These intervals read in 3-digit accuracy
\[
  [-1.618, -0.306] \cup [0.306, 1.618].
\]
From the second part of Theorem \ref{mr} we know a simple lower bound for $\mu_3$:
\[
   \mu_3 \ge \frac{1}{2\sqrt{3}} \approx 0.289 .
\]

\begin{remark}
{\rm
The essential step for estimating the inf-sup constant $\alpha$ from below was the existence of the matrix $H$. Translated into a general Hilbert space setting and slightly more general,  the essential requirements on such a linear operator $H \colon V \longrightarrow V$ are that
\begin{enumerate}
\item
there are some positive constants $c_1$, $c_2$ such that
\[
  |a(u,H u)| \ge c_1 \, \|u\|_V^2 
  \quad \text{and} \quad
  \|H u\|_V \le c_2 \, \|u\|_V
  \quad \text{for all} \quad u \in V ,
\]
and
\item
$\ker B$ is an invariant subspace of $H$.
\end{enumerate}
Then it follows analogously to the previous proof that $\alpha \ge c_1 / c_2$.
}
\end{remark}

\subsection{A numerical example}
\label{NumEx}

We refer to \cite{kolmbauer:11} for numerical experiments for time-periodic parabolic optimal control problems. 
Here we present some numerical experiments for the time-periodic Stokes control problem  on the unit square domain $\Omega = (0, 1) \times (0, 1) \subset \mathbb{R}^2$. 
Following Example 1 in \cite{gunzburger00} we choose the target velocity $\mathbf{u}_d(x,y) = \left[(U(x,y),V(x,y)\right]^T$, given by
\[
  U(x, y) = 10 \frac{\partial}{\partial y} (\varphi(x) \varphi(y)) 
 \quad \text{and} \quad  
  V(x, y) = - 10 \frac{\partial}{\partial x} (\varphi(x) \varphi(y))
\]
with
\[
  \varphi(z) = \big( 1 - \cos( 0.8 \pi z) \big) (1 - z)^2.
\]
The velocity $\mathbf{u}_d(x,y)$ is divergence free. Note that, contrary to the velocity tracking problem for time-periodic Stokes flow considered here, in \cite {gunzburger00} the velocity tracking problem was discussed for a time-dependent Navier-Stokes flow.  

The problem was discretized by the Taylor-Hood pair of finite element spaces consisting of continuous piecewise quadratic polynomials for the velocity $\mathbf{u}(x,y)$ and the force $\mathbf{f}(x,y)$ and continuous piecewise linear polynomials for the pressure $p(x,y)$ on a triangulation of $\Omega$. 
The initial mesh contains four triangles obtained by connecting the two diagonals.
The final mesh was constructed by applying $\ell$ uniform refinement steps to the initial mesh, leading to a mesh size $h = 
2^{-\ell}$. The total number of unknowns on the finest level $\ell = 4$ is 18 056.

Table~\ref{tab:1} - \ref{tab:3} contain the numerical results produced by the preconditioned MINRES method with the block diagonal preconditioner $\mathcal{P}$ as described in Theorem \ref{CoeffEx2}.
The considered values for the  mesh size $h$, the frequency $\omega$, and the regularization parameter $\nu$ are specified in the table caption and the first columns. 
The second columns show the minimal intervals $[\widehat{\mu}_3,\widehat{\mu}_4]$ that enclose all positive eigenvalues of $\mathcal{P}^{-1}\mathcal{M}$. These intervals were computed with an extended version of the preconditioned MINRES method by the help Ritz values and harmonic Ritz values, see \cite{silvester:11}.
The third columns show the (constant) interval $[\mu_3,\mu_4]$ containing all positive eigenvalues as discussed right after Theorem \ref{CoeffEx2} for comparison. 
The fourth columns contain the number $\hat{k}$ of MINRES iterations that are required for reducing the initial residual in the $\mathcal{P}^{-1}$-norm by a factor of $\varepsilon = 10^{-8}$ with initial guess $x_0 = 0$. 
This number $\hat{k}$ is compared with the theoretical bound $k$, which is shown in the last columns, based on the estimate
\[
  \|r_{2l} \|_{\mathcal{P}^{-1}} 
   \le \frac{2 q^l}{1 + q^{2l}} \, \|r_0\|_{\mathcal{P}^{-1}}
  \quad \text{with} \quad
  q = \frac{\kappa\left( \mathcal{P}^{-1} \mathcal{M}\right) - 1}{\kappa\left( \mathcal{P}^{-1} \mathcal{M}\right) + 1},
\]
for the residual $r_k$ of the $k$-th iterate, see, e.g., \cite{greenbaum:97}, and the eigenvalue estimates for $\mathcal{P}^{-1} \mathcal{M}$ discussed right after Theorem  \ref{CoeffEx2}.

\begin{table}[h]
\caption{$\nu = 1$, $\omega = 1$}
\label{tab:1}       

\medskip
\centering
\begin{tabular}{p{1.5cm}cc@{\quad\quad\quad}cc}
\hline\noalign{\smallskip}
$h$ & $[\widehat{\mu}_3,\widehat{\mu}_4]$ & $[\mu_3,\mu_4]$ & $\widehat{k}$ & $k$ \\
\noalign{\smallskip}\hline\noalign{\smallskip}
1      & [0.627 , 1.595] & [0.306, 1.618] & 6 & 102 \\
0.5    & [0.620 , 1.612] & [0.306, 1.618] & 26 & 102 \\
0.25   & [0.619 , 1.616] & [0.306, 1.618] & 28 & 102 \\
0.125  & [0.618 , 1.618] & [0.306, 1.618] & 28 & 102 \\
0.0625 & [0.618 , 1.618] & [0.306, 1.618] & 28 & 102 \\
\noalign{\smallskip}\hline
\end{tabular}
\end{table}

\begin{table}[h]
\caption{$h = 0.0625$, $\nu = 1$}
\label{tab:2}       

\medskip
\centering
\begin{tabular}{p{1.5cm}cc@{\quad\quad\quad}cc}
\hline\noalign{\smallskip}
$\omega$ & $[\widehat{\mu}_3,\widehat{\mu}_4]$ & $[\mu_3,\mu_4]$ & $\widehat{k}$ & $k$ \\
\noalign{\smallskip}\hline\noalign{\smallskip}
$0$    & [0.618 , 1.618] & [0.306, 1.618] & 18 & 102 \\
$1$    & [0.618 , 1.618] & [0.306, 1.618] & 28 & 102 \\
$10^2$ & [0.611 , 1.613] & [0.306, 1.618] & 42 & 102 \\
$10^4$ & [0.565 , 1.614] & [0.306, 1.618] & 44 & 102 \\
$10^8$ & [0.618 , 1.618] & [0.306, 1.618] & 16 & 102 \\
\noalign{\smallskip}\hline
\end{tabular}
\end{table}

\begin{table}[h]
\caption{$h = 0.0625$, $\omega = 1$}
\label{tab:3}       

\medskip
\centering
\begin{tabular}{p{1.5cm}cc@{\quad\quad\quad}cc}
\hline\noalign{\smallskip}
$\nu$ & $[\widehat{\mu}_3,\widehat{\mu}_4]$ & $[\mu_3,\mu_4]$ & $\widehat{k}$ & $k$ \\
\noalign{\smallskip}\hline\noalign{\smallskip}
$10^{-8}$ & [0.566 , 1.614] & [0.306, 1.618] & 43 & 102 \\
$10^{-4}$ & [0.611 , 1.613] & [0.306, 1.618] & 42 & 102 \\
$10^{-2}$ & [0.619 , 1.618] & [0.306, 1.618] & 38 & 102 \\
$1$       & [0.618 , 1.618] & [0.306, 1.618] & 28 & 102 \\
$10^8$    & [0.618 , 1.618] & [0.306, 1.618] & 28 & 102 \\
\noalign{\smallskip}\hline
\end{tabular}
\end{table}

As expected from the results of Theorem \ref{CoeffEx2}, the eigenvalues of $\mathcal{P}^{-1} \mathcal{M}$ are bound\-ed away from 0 and $\infty$ independent of $h$, $\omega$, and $\nu$ leading to a uniform bound for the number of MINRES-iterations. The theoretical bound $\mu_4$ for the largest positive eigenvalues is rather close to the observed values, while the theoretical (uniform) bound $\mu_3$ for the smallest positive eigenvalues underestimates the observed values. Numerical experiments indicate that this is mainly due to the lower bound $1/\sqrt{3}$, which underestimates $\alpha$, and not due to the subsequent use of the estimates from Theorem \ref{mr}.

\section{Concluding Remarks}
\label{Concluding}

The results from Sect.~2 - 4 apply to quite general classes of saddle point problems and provide sharp stability estimates as well as particular spectral properties. 
The subsequent discussion of two model problems from optimal control does not only demonstrate the applicability of the theoretical results but also shows the robust behavior of the associated preconditioned MINRES method with respect to 
the involved numerical and model parameters for the particularly chosen preconditioners. 

Concerning the implementation of the iterative methods one important issue has not been addressed so far. The use of a preconditioner $\mathcal{P}$ requires the evaluation of expressions of the form $\mathcal{P}^{-1} y$ for some given vector $y$. In all discussed optimality systems for the discretized model problems this is a nontrivial task due to the potentially high number of involved unknowns. For example, the application of the preconditioner for the velocity tracking problem requires the evaluation of
$\mathbf{P}^{-1} \underline{\mathbf{v}}$ and $S^{-1} \underline{q}$ with $\mathbf{P}$ and $S$, given by (\ref{PS}), for some vectors $\underline{\mathbf{v}}$ and $\underline{q}$. In a first step of approximation, $S^{-1}$ is replaced by $(1 + \sqrt{\nu} \omega) M_p^{-1} + \sqrt{\nu} \omega K_p^{-1}$ (Cahout-Charbard preconditioner, see \cite{cahouet:88}), where $M_p$ and $K_p$ are the mass matrix and the discretized negative Laplacian in the finite element space $Q_h$ for the pressure. With this replacement the application of the preconditioner involves only matrices which can be interpreted as discretized diffusion-reaction operators of second order. In a second step of approximation these matrices are replaced by efficient preconditioners (like multigrid preconditioners) which are well-established for this class of problems.
Such a modified and efficiently realizable preconditioner leads to similar performance results as the original theoretical preconditioner according to the analysis presented, e.g., in \cite{olshanskii:00}, \cite{bramble:97}, \cite{mardal:04}, \cite{mardal:06}, \cite{olshanskii:06}. 

\section*{Appendix}
\label{app}

The detailed arguments for the inequality (\ref{maxval}), see the proof of Theorem \ref{thmsym}, are as follows.

The matrix $\lambda_\text{max}^A \, P - A$ is positive semidefinite, see (\ref{lambda}). Therefore, its Schur complement is positive semidefinite, too:
\[
  \left\langle \left(
   \lambda_\text{max}^A \, P_1 - A_{11} - A_{10} \left(\lambda_\text{max}^A \, P_0 - A_{00} \right)^{-1} A_{01} \right) u_1, u_1 \right\rangle \ge 0 .
\]
Similar as in the proof of Theorem \ref{thmsym} one obtains
\[
  \left\langle \left(
   A_{10} \left(\lambda_\text{max}^A \, P_0 - A_{00} \right)^{-1} A_{01} \right) u_1, u_1 \right\rangle 
     \ge \frac{\alpha}{\lambda_\text{max}^A - \alpha}  \, 
     \left\langle A_{10} A_{00}^{-1} A_{01} u_1, u_1 \right\rangle .
\] 
This implies
\[
  \left\langle \left(\lambda_\text{max}^A \, P_1 - A_{11} \right) u_1, u_1 \right\rangle 
   - \frac{\alpha}{\lambda_\text{max}^A - \alpha}  \, \left\langle A_{10} A_{00}^{-1} A_{01} u_1, u_1 \right\rangle \ge 0,
\]
and, therefore, after dividing by $\langle P_1 u_1,u_1 \rangle$,
\begin{equation} \label{con1}
  \lambda_\text{max}^A - r_2  - \frac{\alpha }{\lambda_\text{max}^A - \alpha } \, r_1 \ge 0
\end{equation}
for the Rayleigh quotients $r_1$ and $r_2$ from the proof of Theorem \ref{thmsym}.

Moreover, the matrix $A - \lambda_\text{min}^A \, P$ is positive semidefinite. Then it follows analogously, however only for $\lambda_\text{min}^A \le 0$, that
\begin{equation} \label{con2}
  r_2 - \lambda_\text{min}^A  - \frac{\alpha }{\alpha - \lambda_\text{min}^A } \, r_1 \ge 0.
\end{equation}
In the proof of Theorem \ref{thmsym} we need an upper bound for
\[
  \phi(r_1,r_2) = \frac{\alpha}{\alpha - \mu} \,  r_1 - r_2.
\]
It is easy to see that the maximum of the linear function $\phi(r_1,r_2)$ under the linear constraints (\ref{con1}) and (\ref{con2}) is attained at the intersection of the corresponding straight lines.
By elementary calculations it follows that the value of $\phi(r_1,r_2)$ at the point of intersection of these lines is given by
\[
  \frac{(\lambda_\text{max}^A + \lambda_\text{min}^A - \alpha) \mu - \lambda_\text{max}^A \lambda_\text{min}^A} {\alpha - \mu},
\]
which completes the proof of (\ref{maxval}).

\bibliographystyle{spmpsci}      
\bibliography{spectprop}   

\end{document}